\newtheorem{thm}{Theorem}[section]
\newtheorem{cor}[thm]{Corollary}
\newtheorem{lem}[thm]{Lemma}
\newtheorem{prop}[thm]{Proposition}
\theoremstyle{remark}
\newtheorem{rem}[thm]{Remark}
\numberwithin{equation}{section}
\newcommand{\al}{\alpha}
\def\({\Bigl(}
\def \){ \Bigr)}
 \def\RR{{\mathbb R}}
\begin{document}
\def\RR{\mathbb{R}}
\def\Exp{\text{Exp}}
\def\FF{\mathcal{F}_\al}

\title[] {Hardy inequalities for fractional $(k,a)$-generalized harmonic oscillator}

\author[]{Wentao Teng}

\address{School of Science and Technology, Kwansei Gakuin University, Japan.}

\email{ wentaoteng6@sina.com}

\keywords {Laguerre holomorphic semigroup;  spherical harmonic expansion;  decomposition of unitary representation; Bochner-type identity; fractional Hardy inequality.}
\subjclass[2000]{22E46, 26A33, 17B22, 47D03, 33C55, 43A32, 33C45 }

\maketitle
\begin{center}
	\small{\textit{(To commemorate all the people who died of the coronavirus as a result of trade of wild animals in Wuhan seafood market)}}
\end{center}

\begin{abstract} In this paper, we will define $a$-deformed Laguerre operators $L_{a,\alpha}$ and $a$-deformed Laguerre holomorphic semigroups on $L^2\left(\left(0,\infty\right),d\mu_{a,\alpha}\right)$. Then we give a spherical harmonic expansion, which reduces to the Bochner-type identity when taking the boundary value $z=\frac{\pi i}2$, of the $(k,a)$-generalized Laguerre semigroup introduced by S. Ben Sa\"id, T. Kobayashi and B. \O rsted. And then we prove a Hardy inequality for fractional powers of the $a$-deformed Dunkl harmonic operator $\triangle_{k,a}:=\left|x\right|^{2-a}\triangle_k-\left|x\right|^a$ using this expansion. When $a=2$, the fractional Hardy inequality reduces to that of Dunkl-Hermite operators given by \'O. Ciaurri, L. Roncal and S. Thangavelu. The operators $L_{a,\alpha}$ also give a tangible characterization of the radial part of the $(k,a)$-generalized Laguerre semigroup on each $k$-spherical component $\mathcal H_k^m\left(\mathbb{R}^N\right)$ for $\lambda_{k,a,m}:=\frac{2m+2\left\langle
k\right\rangle+N-2}a\geq -1/2$ defined via decomposition of unitary representation.

\end{abstract}

\input amssym.def

\section{Introduction}

\

Dunkl theory is a far-reaching generalization of Euclidean Fourier analysis associated with root system with a rich structure parallel to ordinary Fourier analysis, where finite reflection groups play the role of orthogonal groups in Euclidean Fourier analysis. The Lebesgue measure was replaced by a weighted
measure $dm_k(x)=h_k(x)dx$ invariant under the reflection group and parameterized by a multiplicity function $k$, and the ordinary partial derivatives were replaced by a kind of differential-difference operators using the finite reflection groups and the multiplicity functions. Such differential-difference operators, called Dunkl operators, gave an explicit expression of the radial part of the Laplacian on a flat Riemann symmetric space. This theory has drawn considerable attention and there have been a lot of works on Dunkl's analysis in the last twenty years.

More recently, S. Ben Sa\"id, T. Kobayashi and B. \O rsted \cite{BSK2} gave a further far-reaching generalization of Dunkl theory by introducing a parameter $a>0$ arisen from the “interpolation” of the two $sl(2,\mathbb R)$ actions on the Weil representation of the metaplectic group $Mp(N,\mathbb R)$
and the minimal unitary representation of the conformal group $O(N+1,2)$. They deformed an $sl_2$ triple studied in \cite{B} via the parameter $a$ such that the $a$-deformed Dunkl harmonic oscillator $\triangle_{k,a}:=\left|x\right|^{2-a}\triangle_k-\left|x\right|^a$ is symmetric on the Hilbert space ${{L}^{2}}\left( {{\mathbb{R}}^{N}},{\vartheta_{k,a}}\left( x
\right)dx \right)$, where ${\vartheta_{k,a}}\left( x
\right)={{\left| x \right|}^{a-2}}h_k(x)$. In the case of 
$k\equiv 0$, such $a$-deformed harmonic oscillator is also a deformation of the operator $\left|x\right|\triangle-\left|x\right|$ studied by Kobayashi and Mano in \cite{KM1,KM2}. 
Motivated by the definition of the classical Fourier transform on ${{L}^{2}}\left( {{\mathbb{R}}^{N}}\right)$ given by Howe \cite{H} via classical harmonic oscillators,
they then
proved the existence of a $(k,a)$-generalized holomorphic semigroup $\mathcal I_{k,a}\left(z\right),\;\Re z\geq0$ with infinitesimal generator $\frac1a\triangle_{k,a}$ acting on ${{L}^{2}}\left( {{\mathbb{R}}^{N}},{\vartheta_{k,a}}\left( x
\right)dx \right)$. The $(k,a)$-generalized Laguerre semigroup  $\mathcal I_{k,a}\left(z\right)$ generalizes
the Hermite semigroup studied by Howe \cite{H} ($k\equiv0$ and $a=2$), the Laguerre semigroup
studied by Kobayashi and Mano \cite{KM1,KM2} ($k\equiv0$ and $a=1$), and the Dunkl Hermite semigroup studied by R\"osler \cite{R1} ($k\geq 0$, $a=2$ and $z=2t,\;t>0$). When taking the boundary value $z=\frac{\pi i}2$, the semigroup  $\mathcal I_{k,a}\left(z\right)$ reduces to the so-called $(k,a)$-generalized Fourier transform $F_{k,a}$, i.e.,\\
\begin{align}\label{Fka}F_{k,a}=c{\mathcal I}_{k,a}\left(\frac{\pi i}2\right),
\end{align}
where $c=e^{i\pi(\frac{2\left\langle k\right\rangle+N+a-2}{2a})}.$
The generalized Fourier transform includes
the Fourier transform ($k\equiv0$ and $a=2$), the Kobayashi-Mano Hankel transform ($k\equiv0$ and $a=1$), and the Dunkl transform \cite{Du2} ($k\geq0$ and $a=2$).

We will define a one-dimensional $a$-deformed Laguerre holomorphic semigroup $I_{a,\alpha;z}:=e^{-\frac za L_{a,\al}}$ with the infinitesimal generator $-\frac 1a L_{a,\al}$, where $L_{a,\al}$ is the $a$-deformed Laguerre operator, and show that it reduces to $a$-deformed Hankel transform $H_{a,\alpha}$ when taking the boundary value $z=\frac{\pi i}2$.  
The operators $L_{a,\al}$ also give an explicit expression of the radial part $\Omega_{k,a}^{(m)}\left(\gamma_z\right)$ of the $(k,a)$-generalized Laguerre semigroup on each $k$-spherical component $\mathcal H_k^m\left(\mathbb{R}^N\right)$ defined via decomposition of unitary representation in \cite[Section 4.1]{BSK2}, i.e., $\Omega_{k,a}^{(m)}\left(\gamma_z\right)f(s)=s^{m}I_{a,\lambda_{k,a,m};z}\left(\left(\cdot\right)^{-m}f\right)\left(s\right)$, $\Re z\geq 0$, $s>0$, where $\lambda_{k,a,m}:=\frac{2m+2\left\langle
k\right\rangle+N-2}a$, for $f\in L^2\left({\mathbb{R}}_+,r^{\;2\left\langle k\right\rangle+N+a-3}dr\right)$ and $\lambda_{k,a,m} \geq -1/2$ as will be shown in Section 5.
We denote $\lambda_a:=\frac{2\left\langle k\right\rangle+N-2}a$.
\begin{thm}
For any function $f\in L^2\left(\left(0,\infty\right),d\mu_{a,\alpha}\right)$, $\alpha \geq -1/2$, we have
$$e^{(\alpha+1)\pi i/2}I_{a,\alpha;\frac{\pi i}2}(f)=H_{a,\alpha}(f),$$
where the $a$-deformed Hankel transform is defined as 
$$H_{a,\alpha}(f)(r)=\frac1{a^\alpha\Gamma(\al+1)}\int_0^\infty f(s)j_\alpha\left(\frac2ar^\frac a2s^\frac a2\right)s^{a\alpha+a-1}\,ds$$
and $j_\alpha(t)=2^\alpha\Gamma(\alpha+1)t^{-\alpha}J_\alpha\left(t\right)$ is the normalized Bessel function. 
\end{thm}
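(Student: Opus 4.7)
The plan is to diagonalize both operators $I_{a,\alpha;\pi i/2}$ and $H_{a,\alpha}$ simultaneously on a common orthonormal basis of $L^{2}((0,\infty),d\mu_{a,\alpha})$. The natural candidate is a family of suitably normalized and rescaled Laguerre functions of the form
\[
\phi_{n}^{a,\alpha}(r) \;=\; c_{n,a,\alpha}\, L_{n}^{\alpha}\!\left(\tfrac{2}{a}r^{a}\right) e^{-r^{a}/a},\qquad n=0,1,2,\dots,
\]
where $L_{n}^{\alpha}$ is the classical Laguerre polynomial and $c_{n,a,\alpha}$ a normalizing constant. First I would verify directly from the explicit form of $L_{a,\alpha}$ (given earlier in the paper) that these functions satisfy the eigenvalue equation
\[
L_{a,\alpha}\phi_{n}^{a,\alpha} \;=\; a(2n+\alpha+1)\,\phi_{n}^{a,\alpha},
\]
which is the classical Laguerre ODE after the substitution $t=\tfrac{2}{a}r^{a}$. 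Completeness of the Laguerre polynomials in $L^{2}((0,\infty),t^{\alpha}e^{-t}\,dt)$ for $\alpha\geq -1/2$, together with the same substitution, yields that $\{\phi_{n}^{a,\alpha}\}$ is an orthonormal basis of $L^{2}((0,\infty),d\mu_{a,\alpha})$.

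Having diagonalized $L_{a,\alpha}$, the semigroup acts on the basis as $I_{a,\alpha;z}\phi_{n}^{a,\alpha} = e^{-z(2n+\alpha+1)/1}\phi_{n}^{a,\alpha}$, so at $z=\pi i/2$ one obtains
\[
e^{(\alpha+1)\pi i/2}\,I_{a,\alpha;\pi i/2}\phi_{n}^{a,\alpha} \;=\; (-1)^{n}\,\phi_{n}^{a,\alpha}.
\]
It therefore suffices to show that $H_{a,\alpha}\phi_{n}^{a,\alpha} = (-1)^{n}\phi_{n}^{a,\alpha}$. Substituting $u=s^{a/2}$ in the defining integral for $H_{a,\alpha}\phi_{n}^{a,\alpha}(r)$ and making the parallel change of variable in the target variable, the identity reduces to the classical Hankel--Laguerre formula
\[
\int_{0}^{\infty} e^{-u^{2}/a}\,L_{n}^{\alpha}\!\left(\tfrac{2}{a}u^{2}\right) j_{\alpha}\!\left(\tfrac{2}{a}\rho\, u\right) u^{2\alpha+1}\,du \;=\; (-1)^{n}\,\textrm{const}\cdot e^{-\rho^{2}/a}\,L_{n}^{\alpha}\!\left(\tfrac{2}{a}\rho^{2}\right),
\]
a standard consequence of the Hankel integral for Laguerre--Gauss functions (which can be derived from the Laguerre generating function). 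By linearity and density, the identity $e^{(\alpha+1)\pi i/2}I_{a,\alpha;\pi i/2} = H_{a,\alpha}$ then extends to all of $L^{2}((0,\infty),d\mu_{a,\alpha})$, once both operators are known to be bounded (in fact unitary up to scalar).

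The main obstacle I expect is twofold. First, one must justify taking the boundary value $z=\pi i/2$: the holomorphic semigroup is a priori defined on $\Re z\geq 0$, and the passage to the imaginary axis needs either an explicit spectral definition on the eigenbasis (so that boundedness and unitarity of the boundary operator follow automatically from $|e^{-i\pi(2n+\alpha+1)/2}|=1$) or a strong-operator convergence argument as $\Re z\downarrow 0$. Second, the hypothesis $\alpha\geq -1/2$ must be used carefully, since it is precisely what guarantees essential self-adjointness of $L_{a,\alpha}$ on the span of $\{\phi_{n}^{a,\alpha}\}$ and completeness of this basis in the weighted $L^{2}$ space; for $\alpha<-1/2$ the Laguerre setup becomes singular and a choice of self-adjoint extension would be required. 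The remaining verifications are routine changes of variables and bookkeeping of normalization constants.
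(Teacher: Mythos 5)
Your proposal is correct, but it takes a genuinely different route from the paper. The paper's proof is a kernel computation: it writes $I_{a,\alpha;z}f=f\ast_{a,\alpha}q_{a,\alpha;z}$ for the $a$-deformed Laguerre convolution, sums the series defining $q_{a,\alpha;z}$ in closed form via the Hille--Hardy generating function to obtain the Mehler-type kernel $\mathcal{T}_r^{a,\alpha}q_{a,\alpha;z}(s)$ involving the modified Bessel function $I_\alpha$ (Lemma 3.4), and then sets $z=\pi i/2$, where $\sinh(\pi i/2)=i$ and $I_\alpha(-ix)=e^{-\alpha\pi i/2}J_\alpha(x)$ turn the semigroup kernel into the Hankel kernel. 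You instead bypass the translation and convolution structure entirely and match eigenvalues on the Laguerre basis: $e^{(\alpha+1)\pi i/2}I_{a,\alpha;\pi i/2}\phi_n^{a,\alpha}=(-1)^n\phi_n^{a,\alpha}$ follows at once from the spectral definition, while $H_{a,\alpha}\phi_n^{a,\alpha}=(-1)^n\phi_n^{a,\alpha}$ is the classical Hankel integral for Laguerre--Gauss functions; unitarity and density finish the job. Your route is more elementary and self-contained for this one theorem, and as a bonus it works for all $\alpha>-1$, since completeness of the Laguerre basis does not require $\alpha\geq-1/2$; that restriction in the paper comes from the Laguerre translation $\mathcal{T}_r^{a,\alpha}$, which you never use, so your remark attributing it to essential self-adjointness is not quite the right reason. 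What the paper's approach buys is the closed-form kernel of $I_{a,\alpha;z}$ for all $\Re z\geq 0$, which is reused later (the identification of $\mathcal{T}_r^{a,\lambda_{k,a,m}}q_{a,\lambda_{k,a,m};z}$ with $(rs)^{-m}\Lambda_{k,a}^{(m)}(r,s;z)$ in Section 5), so the two ingredients are not interchangeable elsewhere in the paper. The only loose end in your write-up is the unverified normalization constant in the Hankel--Laguerre formula; with the stated normalizations of $j_\alpha$ and $H_{a,\alpha}$ it does equal $1$, but since the theorem is an exact identity you should carry out that bookkeeping explicitly.
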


We will then give a spherical harmonic expansion of the $(k,a)$-generalized Laguerre semigroup.
 \begin{thm}(Spherical harmonic expansion of the $(k,a)$-generalized Laguerre semigroup)
For $f\in {{L}^{2}}\left( {{\mathbb{R}}^{N}},{\vartheta_{k,a}}\left( x \right)dx \right)$, $4\left\langle k\right\rangle+2N+a-4 \geq 0$, and $x\in\mathbb{R}^N,\;x=rx'$, with $r\in\mathbb{R}^+$, $x'\in
\mathbb S^{N-1}$, we have
\begin{equation}\label{expansion2}{\mathcal I}_{k,a}\left(z\right)f\left(x\right)=\sum_{m,j}Y_{m,j}(x')r^mI_{a,\lambda_{k,a,m};z}\left(\left(\cdot\right)^{-m}f_{m,j}\right)\left(r\right),\end{equation}
where $\Re z\geq0$. Specially, the $(k,a)$-generalized Laguerre semigroup reduces to the one dimensional $a$-deformed Laguerre holomorphic semigroup for radial functions, that is,
for $f=f_0\left(\left|\cdot\right|\right)$, $f_0\in L^2\left({\mathbb{R}}_+,r^{\;2\left\langle k\right\rangle+N+a-3}dr\right)$ and $r=\left|x\right|$, we have
$${\mathcal I}_{k,a}\left(z\right)f\left(x\right)={\left({\mathcal I}_{k,a}\left(z\right)f\right)}_0\left(r\right),\;\;\;\;{\left({\mathcal I}_{k,a}\left(z\right)f\right)}_0\left(r\right)=I_{a,\lambda_{a};z}(f_0)\left(r\right).$$
\end{thm}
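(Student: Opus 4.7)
The plan is to reduce the theorem to a one-dimensional statement on each $k$-spherical component by combining the polar $k$-spherical harmonic decomposition of $L^2(\mathbb R^N,\vartheta_{k,a}\,dx)$ with the abstract decomposition of the unitary representation carrying $\mathcal I_{k,a}(z)$ established in \cite[Section 4.1]{BSK2}, and then to insert the explicit identification of the radial part with $I_{a,\lambda_{k,a,m};z}$ promised in the introduction and to be proved in Section~5.

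Concretely, first I would expand any $f\in L^2(\mathbb R^N,\vartheta_{k,a}\,dx)$ in an orthonormal system $\{Y_{m,j}(x')\}$ of $k$-spherical harmonics on $\mathbb S^{N-1}$ to write $f(x)=\sum_{m,j}Y_{m,j}(x')f_{m,j}(r)$, where the radial coefficients $f_{m,j}$ lie in the weighted space $L^2(\mathbb R_+,r^{\,2\langle k\rangle+N+a-3}\,dr)$ dictated by the polar factorization of $\vartheta_{k,a}\,dx$. Second, the BSKO decomposition guarantees that each closed subspace spanned by $\{Y_{m,j}(x')g(r):g\in L^2(\mathbb R_+,r^{\,2\langle k\rangle+N+a-3}\,dr)\}$ is $\mathcal I_{k,a}(z)$-invariant and that the restriction acts on the radial factor by $\Omega_{k,a}^{(m)}(\gamma_z)$. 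Third, substituting the formula $\Omega_{k,a}^{(m)}(\gamma_z)g(s)=s^{m}I_{a,\lambda_{k,a,m};z}((\cdot)^{-m}g)(s)$ into each summand and resumming yields exactly the claimed expansion. The hypothesis $4\langle k\rangle+2N+a-4\geq 0$ is used only to make the condition $\lambda_{k,a,m}\geq -\tfrac12$ required by that one-dimensional formula hold for every $m\geq 0$; since $\lambda_{k,a,m}$ is nondecreasing in $m$, it suffices to check it at $m=0$, where it reduces to precisely the stated hypothesis.

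The radial specialization will then be immediate: if $f=f_{0}(|\cdot|)$ only the constant spherical harmonic survives, so $m=0$, $\lambda_{k,a,0}=\lambda_{a}$, and the factors $r^{m}$ and $(\cdot)^{-m}$ collapse to $1$, giving $\mathcal I_{k,a}(z)f(x)=I_{a,\lambda_{a};z}(f_{0})(r)$. The main obstacle is not the present theorem but its supporting ingredient in Section~5: one must explicitly diagonalize the radial part of $\tfrac{1}{a}\triangle_{k,a}$ on each $\mathcal H_{k}^{m}(\mathbb R^N)$ and identify it, after the conjugation by $r^{m}$, with $-\tfrac{1}{a}L_{a,\lambda_{k,a,m}}$; once that identification is in hand the current statement is essentially bookkeeping. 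A minor technical point that I would handle is convergence of the double sum in $L^{2}(\mathbb R^N,\vartheta_{k,a}\,dx)$, which follows from Parseval in the orthogonal decomposition combined with the boundedness of $\mathcal I_{k,a}(z)$ for $\mathrm{Re}\,z\geq 0$ established in \cite{BSK2}.
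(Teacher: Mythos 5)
Your proposal is correct, but it takes a genuinely different route from the one the paper uses to prove this theorem. The paper's proof works entirely with the explicit eigenbasis: it expands $f$ in the orthonormal system $\Phi_{l,m,j}^{(a)}(x)=Y_j^m(x')\psi_{l,m}^{(a)}(r)$, uses the eigenvalue relation $-\Delta_{k,a}\Phi_{l,m,j}^{(a)}=a(2l+\lambda_{k,a,m}+1)\Phi_{l,m,j}^{(a)}$ to write $\mathcal I_{k,a}(z)f=\sum_{l,m,j}e^{-z(2l+\lambda_{k,a,m}+1)}\langle f,\Phi_{l,m,j}^{(a)}\rangle_{k,a}\Phi_{l,m,j}^{(a)}$, and then collapses the $l$-sum via the elementary identity $\widetilde{\varphi}_l^{a,\lambda_{k,a,m}}(r)=r^{-m}\psi_{l,m}^{(a)}(r)$ together with the spectral form of $I_{a,\alpha;z}$. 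You instead invoke the $SL(2,\mathbb R)$-representation decomposition $\Omega_{k,a}(\gamma_z)=\sum^\oplus \mathrm{id}\otimes\Omega_{k,a}^{(m)}(\gamma_z)$ from BSK\O\ and substitute the identification $\Omega_{k,a}^{(m)}(\gamma_z)g(s)=s^m I_{a,\lambda_{k,a,m};z}((\cdot)^{-m}g)(s)$; this is exactly the alternative derivation the paper itself points out in Remark 5.2(i), and it is not circular, since the paper proves that identification (Theorem 5.1) from the closed kernel formula for $\Lambda_{k,a}^{(m)}(r,s;z)$ and the $a$-deformed Laguerre translation kernel, not from the expansion you are proving. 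The trade-off: the paper's route is more elementary, needing only $L^2$ orthonormal expansions and a regrouping of an absolutely convergent triple series, and it sidesteps the integral kernel, whose pointwise convergence on $\Re z=0$ holds only on a dense subspace and must be extended by boundedness; your route is structurally more illuminating, exhibiting the expansion as the explicit form of the unitary decomposition, but it front-loads the analytic work into the kernel identification and the density argument. Your reading of the hypothesis $4\langle k\rangle+2N+a-4\geq 0$ as equivalent to $\lambda_{k,a,0}=\lambda_a\geq -\tfrac12$, with monotonicity in $m$ handling all higher components, matches the paper's intent, and your radial specialization is the same as the paper's (take $m=0$ in the Hecke--Bochner form of the identity).
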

\begin{rem}
i). This theorem, together with \eqref{Fka} and Theorem 1.1, imply the Bochner-type identity in \cite[Theorem 5.21]{BSK2}, which was used in \cite{GI2} for Schwartz functions to prove Pitt's inequalities for the generalized Fourier transform. That is, taking the boundary value $z=\frac{\pi i}2$, the expansion reduces to 
\begin{align}\label{Fkasum}F_{k,a}f(x)=\sum_{m,j}e^{-i\pi m/a}Y_{m,j}(x')r^mH_{a,\lambda_{k,a,m}}\left(\left(\cdot\right)^{-m}f_{m,j}\right)\left(r\right).\end{align}
This theorem also generalizes the result in \cite{T2} that Hermite semigroups reduce to Laguerre semigroups of type $\frac N2-1$ (the case of $a=2$ and $k=0$) for radial functions on $\mathbb R^N$.\\
ii). When $a=2$ and $z=2t,\;t>0$, the expansion reduces to the formula given in Theorem 4.5 in \cite{CR}, but our proof is different from that in \cite{CR} even in this case because we used the new tools introduced by  S. Ben Sa\"id, T. Kobayashi and B. \O rsted \cite{BSK2} in the development of $(k,a)$-generalized Fourier analysis.
\end{rem}

We will be interested in Hardy inequalities of the form
\begin{equation}\label{1}\int_X\frac{\left|f\left(x\right)\right|^2}{\left(1+\left|x\right|^2\right)^\sigma}d\eta\left(x\right)\leq B_\sigma\left\langle L^\sigma f,f\right\rangle\end{equation}
(or the Hardy inequality with homogeneous potential) for given
$0<\sigma<1$, where $L^\sigma$ is the fractional powers of a non-negative
self-adjoint operator $L$ and $B_\sigma$ is a constant. It is a generalization of the classical Hardy inequality on $\mathbb{R}^N$ 
$$\frac{{(N-2)}^2}4\int_{\mathbb{R}^N}\frac{\left|f\left(x\right)\right|^2}{\left|x\right|^2}dx\leq\int_{\mathbb{R}^N}\left|\nabla f(x)\right|^2dx,\;N\geq3.$$

In \cite{CR}, \'O. Ciaurri, L. Roncal and S. Thangavelu worked with \textit{conformally invariant} fractional powers of Dunkl--Hermite operators ${\mathbf
H}_k=-\triangle_k+\left|x\right|^2$, where $\triangle_k$ is a generalization of the classical Laplacian on Euclidean space called Dunkl Laplacian, and proved the fractional Hardy
inequalities for these operators of form \eqref{1} using ground state
representation. The conformal invariant fractional powers was borrowed from the context of sublaplacians on Heisenberg groups (see \cite{Ron}). They also deduced the Hardy inequalities for pure fractional powers of Dunkl--Hermite operators $\mathbf H_k^\sigma$ (see \cite[Corollary 1.5]{CR}) as a consequence of the conformally invariant fractional Hardy inequalities.

We will prove a Hardy inequality of type \eqref{1}
for fractional powers of the $a$-deformed Dunkl--Hermite operator
$\triangle_{k,a}=\left|x\right|^{2-a}\triangle_k-\left|x\right|^a$ using the spherical harmonic expansion of the $(k,a)$-generalized Laguerre semigroup \eqref{expansion2}. 

\begin{thm}\label{thm2}
Let us define the constant
$$B_{\alpha,\sigma}^\delta:=\delta^\sigma\frac{\Gamma\left(\frac{\alpha+2+\sigma}2\right)}{\Gamma\left(\frac{\alpha+2-\sigma}2\right)}.$$
For $0<\sigma<1,\;\delta>0$ and $4\left\langle k\right\rangle+2N+a-4 \geq 0$,
$$\left(\frac a2\right)^\sigma B_{\lambda_a,\sigma}^\delta\int_{\mathbb{R}^N}^{}\frac{\left|f(x)\right|^2}{\left(\delta+{\displaystyle\frac2a}\left|x\right|^a\right)^\sigma}\vartheta_{k,a}(x)dx\leq{\left\langle\left(-\triangle_{k,a}\right)_{\sigma}f,f\right\rangle}_{L^2\left(\mathbb{R}^N,\vartheta_{k,a}(x)dx\right)}$$
for all $f\in C_0^\infty(\mathbb{R}^N)$.
\end{thm}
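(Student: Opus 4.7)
The plan is to use Theorem 1.2 to reduce the inequality to a one-parameter family of one-dimensional Hardy inequalities on the radial components, and then to establish each of those by combining the standard subordination formula for fractional powers with the explicit structure of the $a$-deformed Laguerre semigroup $I_{a,\alpha;t}$.

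For the reduction, I would expand $f\in C_0^\infty(\mathbb R^N)$ in $k$-spherical harmonics and invoke \eqref{expansion2}, which shows that $\mathcal I_{k,a}(z)=e^{-(z/a)(-\triangle_{k,a})}$ acts on the $(m,j)$-component as the conjugation of $I_{a,\lambda_{k,a,m};z}$ by the factor $r^m$. Because the semigroup decomposition diagonalizes $-\triangle_{k,a}$, the same holds for its fractional power $(-\triangle_{k,a})_\sigma$: it acts on the $(m,j)$-component, after stripping off $r^m$, as the corresponding fractional power of $L_{a,\lambda_{k,a,m}}$ on $L^2((0,\infty),d\mu_{a,\lambda_{k,a,m}})$. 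Orthogonality of $\{Y_{m,j}\}$ with respect to $\vartheta_{k,a}\,dx$ splits both sides of the claimed inequality into sums indexed by $(m,j)$, and the radial measure emerging from the angular integration coincides with $d\mu_{a,\lambda_{k,a,m}}$. The theorem is thus reduced to proving, for every $\alpha\ge -1/2$ and every $g$ in the natural domain,
\begin{equation}\label{1dhardy}
\Bigl(\frac a2\Bigr)^\sigma B_{\alpha,\sigma}^\delta\int_0^\infty\frac{|g(r)|^2}{\bigl(\delta+\frac2a r^a\bigr)^\sigma}\,d\mu_{a,\alpha}(r)\le\bigl\langle(L_{a,\alpha})_\sigma g,g\bigr\rangle_{L^2(d\mu_{a,\alpha})}.
\end{equation}
The hypothesis $4\langle k\rangle+2N+a-4\ge0$ together with $m\ge0$ guarantees $\lambda_{k,a,m}\ge\lambda_a\ge-1/2$, so \eqref{1dhardy} is needed only in the range $\alpha\ge-1/2$.

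For \eqref{1dhardy} I would combine the Bochner subordination formula
$$\lambda^\sigma=\frac{\sigma}{\Gamma(1-\sigma)}\int_0^\infty(1-e^{-t\lambda})\,t^{-1-\sigma}\,dt\qquad(0<\sigma<1)$$
applied to $L_{a,\alpha}+\tfrac{a\delta}{2}$ (so as to absorb the constant $(\tfrac{a\delta}{2})^\sigma$ and introduce the exponential decay $e^{-a\delta t/2}$) with the elementary identity $(\delta+s)^{-\sigma}=\Gamma(\sigma)^{-1}\int_0^\infty\tau^{\sigma-1}e^{-(\delta+s)\tau}\,d\tau$ applied to the weight on the left of \eqref{1dhardy} with $s=\frac 2a r^a$. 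The change of variables $s=\frac 2a r^a$ puts $L_{a,\alpha}$ into the form of a classical Laguerre operator and turns the Mehler-type kernel of $I_{a,\alpha;t}$ into the familiar Laguerre heat kernel built from the normalized Bessel function $j_\alpha$ appearing in Theorem 1.1. Diagonalizing $L_{a,\alpha}$ in the orthonormal basis of $a$-deformed Laguerre functions then reduces \eqref{1dhardy} to a family of Gamma-ratio inequalities, and the key Beta-type identity
$$\int_0^\infty(1-e^{-bt})t^{-1-\sigma}e^{-ct}\,dt=\Gamma(-\sigma)\bigl(c^\sigma-(b+c)^\sigma\bigr),$$
evaluated at $c=\tfrac{a\delta}{2}$ and $b$ the eigenvalue of $L_{a,\alpha}$, combined with the Mellin transform of the ground-state-weighted factor, produces exactly the constant $(\tfrac a2)^\sigma B_{\alpha,\sigma}^\delta$.

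The final passage to the theorem uses monotonicity $\alpha\mapsto B_{\alpha,\sigma}^\delta$, which follows from $(d/d\alpha)\log B_{\alpha,\sigma}^\delta=\frac12[\psi(\tfrac{\alpha+2+\sigma}{2})-\psi(\tfrac{\alpha+2-\sigma}{2})]>0$ by strict monotonicity of the digamma function on $(0,\infty)$. Since $\lambda_{k,a,m}\ge\lambda_a$, we have $B_{\lambda_{k,a,m},\sigma}^\delta\ge B_{\lambda_a,\sigma}^\delta$ for all $m\ge0$, so replacing each component constant by the smaller uniform constant and resumming yields the stated inequality. I expect the principal obstacle to lie in the middle step: one must carefully track all the constants produced by the $a$-deformation (the measure $d\mu_{a,\alpha}$, the scaling $s=\tfrac 2a r^a$, the factor $a$ in the exponent of $\mathcal I_{k,a}(z)$, and the normalization of the Mehler kernel of $I_{a,\alpha;t}$) and verify that they combine into the clean Gamma-ratio $\Gamma(\tfrac{\alpha+2+\sigma}{2})/\Gamma(\tfrac{\alpha+2-\sigma}{2})$ independently of the Laguerre index, which is what makes the Gamma-ratio constant a \emph{uniform} lower bound across all eigenspaces rather than an eigenvalue-dependent one.
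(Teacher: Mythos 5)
Your overall architecture coincides with the paper's: expand $f$ in $k$-spherical harmonics, use \eqref{expansion2} (equivalently \eqref{Ttka}) to show that $\left(-\triangle_{k,a}\right)_{\sigma}$ acts on the $(m,j)$-component, after stripping off $r^m$, as $L_{a,\lambda_{k,a,m};\sigma}$ on $L^2\left((0,\infty),d\mu_{a,\lambda_{k,a,m}}\right)$; then invoke a one-dimensional fractional Laguerre Hardy inequality for each $\alpha=\lambda_{k,a,m}$ and uniformize the constant over $m$. Your reduction is correct (the measure bookkeeping $r^{2m}\,r^{2\langle k\rangle+N+a-3}dr=d\mu_{a,\lambda_{k,a,m}}$ checks out), and your closing step --- replacing $B_{\lambda_{k,a,m},\sigma}^\delta$ by the smaller $B_{\lambda_a,\sigma}^\delta$ via monotonicity of $\alpha\mapsto B_{\alpha,\sigma}^\delta$ --- is a legitimate variant of what the paper does: the paper instead retains the intermediate Macdonald-function term of its Theorem 3.3 and bounds it below pointwise using Lemma 4.3, which is the same Gamma-ratio monotonicity in disguise.

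The genuine gap is in your middle step, where you propose to prove the one-dimensional inequality from scratch. First, the subordination kernel $t^{-1-\sigma}$ and the Beta-type identity you quote produce the \emph{pure} fractional power $\lambda^{\sigma}$ (respectively $(b+c)^{\sigma}-c^{\sigma}$), not the conformally invariant power $(2a)^{\sigma}\,\Gamma\bigl(\tfrac{\lambda}{2a}+\tfrac{1+\sigma}{2}\bigr)/\Gamma\bigl(\tfrac{\lambda}{2a}+\tfrac{1-\sigma}{2}\bigr)$ to which $\left(-\triangle_{k,a}\right)_{\sigma}$ restricts on each eigenspace; the correct subordination (the paper's Lemma 4.2) has $(\sinh t)^{-1-\sigma}$ in place of $t^{-1-\sigma}$, and it is precisely this kernel that generates the Gamma ratios $S_l^{a,\alpha;\sigma}$. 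Second, and more seriously, ``diagonalizing $L_{a,\alpha}$ in the Laguerre basis reduces the inequality to a family of Gamma-ratio inequalities'' cannot work: the right-hand side is diagonal in that basis, but the weighted left-hand side $\int_0^\infty|g|^2\bigl(\delta+\tfrac2a r^a\bigr)^{-\sigma}d\mu_{a,\alpha}$ is not, and two quadratic forms cannot be compared entry by entry when only one of them is diagonal. The one-dimensional inequality with the sharp constant is the paper's Theorem 3.3, obtained from Ciaurri--Roncal--Thangavelu's Theorem 1.1 by the substitution $r\mapsto\sqrt{2/a}\,r^{a/2}$, and its proof there is by ground state representation built on the Macdonald-function weights $\omega_{\alpha,\pm\sigma}^{\delta,a}$ --- a substantive technique your sketch does not reproduce. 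To close the argument you must either carry out that ground-state computation or import the rescaled result, as the paper does.
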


When $a=2$, this inequality reduces to the fractional Hardy inequality in \cite{CR}, which was proved using Dunkl--Hermite expansions. 
The definition of the modified fractional operator $\left(-\triangle_{k,a}\right)_{\sigma}$ will be given analogously as in \cite{CR} in Section 4. We can also deduce the Hardy inequalities for pure fractional powers of the operator $\left(-\triangle_{k,a}\right)^{\sigma}$ analogous to Corollary 1.5 in \cite{CR} from this Hardy inequality. An uncertainty principle for fractional powers of $\triangle_{k,a}$ can also be deduced from this Hardy inequality as in \cite{Ron}.

There have also been several other studies of Hardy inequalities of form \eqref{1}. For example, D. Gorbachev, V. Ivanov and S. Tikhonov \cite{GI1} proved a
sharp Pitt's inequality for Dunkl transform in
$L^2\left(\mathbb{R}^N\right)$. Such Pitt's inequalities can imply a Hardy inequality of the form \eqref{1} for fractional
powers of the Dunkl Laplacian $\triangle_k$. They also proved a
sharp Pitt's inequality for the generalized Fourier transform
$F_{k,a}$ in \cite{GI2} using the Bochner-type identity \eqref{Fkasum}, a particular case of the expansion \eqref{expansion2} we will use. By the formula (5.6 b) in \cite{BSK2},
The fractional powers of $-{{\left| x \right|}^{2-a}}{{\Delta
}_{k}}$ can be naturally defined as follows,
$$F_{k,a}\left(\left(-\left|\cdot\right|^{2-a}\Delta_k\right)^\beta f\right)\left(\xi\right)=\left(\left|\xi\right|^a\right)^\beta F_{k,a}\left(f\right)\left(\xi\right).$$ 
And then from the inversion formula \cite[Theorem 5.3]{BSK2} of the $(k,a)$-generalized Fourier transform, the Pitt's inequality in \cite{GI2} implies also a
Hardy inequality of the form \eqref{1} for $L=-{{\left| x \right|}^{2-a}}{{\Delta
}_{k}}$ for $a={\textstyle\frac2n},\;n\in{\mathbb{N}}_+$. When
$a=2$, this Hardy inequality reduces to that for fractional powers of the Dunkl Laplacian in \cite{GI1}. The two Pitt's inequalities imply the logarithmic
uncertainty principle for the Dunkl transform and $F_{k,a}$,
respectively.

This paper is organized as follows. In Section 2, we recall the
tools and concepts we will use to prove the main theorems.
We refer to
\cite{BSK1,BSK2,Du1} for the tools and concepts. 
In Section 3, we give the definitions of the $a$-deformed Laguerre convolution and the fractional $a$-deformed Laguerre
operators, and then
prove a Hardy inequality for the fractional $a$-deformed Laguerre
operators, which reduces to the Hardy inequality for fractional
Laguerre operators given in \cite{CR} when $a=2$. We will prove Theorem 1.1 in Section 3 as well. 
In Section 4 we give the proof of Theorem 1.2 using the tools introduced by S. Ben Sa\"id, T. Kobayashi and B. \O rsted \cite{BSK2} and then prove Theorem 1.4.
In Section 5 we will give the tangible characterization of the radial part of the $(k,a)$-generalized Laguerre semigroup on each $k$-spherical component $\mathcal H_k^m\left(\mathbb{R}^N\right)$ for $\lambda_{k,a,m} \geq -1/2$.

\section{Preliminaries}
\subsection{Dunkl operators and Dunkl transform}

\

Given a root system $R$ in the Euclidean space $\mathbb R^N$, denote by $G$ the
finite subgroup of $O(N)$ generated by the reflections
$\sigma_\alpha$ associated to the root system. Define a $multiplicity$ $function$
$k:R\rightarrow \mathbb C$ such that $k$ is $G$-invariant, that is,
$k\left(\alpha\right)=k\left(\beta\right)$ if $\sigma_\alpha$ and
$\sigma_\beta$ are conjugate. We assume $k\geqslant0$ in this
paper. The $Dunkl\;operators\;T_\xi$, $\xi\in\mathbb{R}^N$, which
were introduced in \cite{Du1}, are defined by the following
deformations by difference operators of directional derivatives
$\partial_\xi$: \begin{align*}T_\xi f(x)=\partial_\xi
f(x)+\sum_{\alpha\in
R^+}k(\alpha)\left\langle\alpha,\;\xi\right\rangle\frac{f(x)-f
(\sigma_\alpha(x))}{\left\langle\alpha,\;x\right\rangle},\end{align*}
where $R^+$ is any fixed positive system of $R$. They commute
pairwise and are skew-symmetric with respect to the $G$-invariant
measure $dm_k(x)=h_k(x)dx$, where
$$h_k(x)=\prod_{\alpha\in R}\vert\left\langle\alpha,\;x\right\rangle\vert^{k(\alpha)}.$$
Denote by $\mathbf N\boldsymbol=N+2\left\langle k\right\rangle$
the homogeneous dimension of the root system, where $\left\langle
k\right\rangle:=\sum_{\alpha\in R^+}k(\alpha)$. Let
$e_j,\;j=1,2,...,N,$ be the canonical orthonormal basis in
$\mathbb{R}^N$ and denote $T_j=T_{e_j}.$ The $Dunkl$ $Laplacian$
is defined by $\triangle_k={\textstyle\sum_{j=1}^N}T_j^2$ and it can be expressed explicitly.

The Dunkl
kernel $E(x,y)$ is the unique analytic solution to the
differential-difference equation system
$$T_\xi f=\left\langle\xi,y\right\rangle f,\;\;f(0)=1,$$ for any fixed $y\in\mathbb{R}^N$.
For $f\in L^1(m_k)$ the Dunkl transform is defined by
$$F(f)(\xi)=\frac1{c_k}\int_{\mathbb{R}^N}f(x)E(-i\xi,\;x)dm_k(x),\;c_k=\int_{\mathbb{R}^N}e^{-\frac{\left|x\right|^2}2}dm_k(x).$$
It is a generalization of and has similar properties with the
classical Fourier transform.

\subsection{An orthonormal basis in ${{L}^{2}}\left( {{\mathbb{R}}^{N}},{\vartheta_{k,a}}\left( x \right)dx \right)$}
\

An $h$-harmonic polynomial of degree $m$ is a homogeneous
polynomial $p$ on $\mathbb{R}^N$ of degree $m$ satisfying
$\triangle_kp=0$. Denote by $\mathcal
H_k^m\left(\mathbb{R}^N\right)$ the space of $h$-harmonic
polynomials of degree $m$. Spherical $h$-harmonics (or just $h$-harmonics) of degree $m$ are then
defined as the restrictions of $\mathcal
H_k^m\left(\mathbb{R}^N\right)$ to the unit sphere $\mathbb S^{N-1}$. The spaces $\mathcal
H_k^m\left(\mathbb{R}^N\right)\vert_{\mathbb S^{N-1}},\;m=0,1,2,\dots$ are finite
dimensional and orthogonal to each other with respect to the measure
$h_k\left(x'\right)d\sigma(x')$. And there is the spherical harmonics
decomposition
\begin{equation}\label{decomposition}L^2\left(\mathbb S^{N-1},h_k\left(x'\right)d\sigma(x')\right)=\sum_{m\in \mathbb N}^\oplus\mathcal H_k^m\left(\mathbb{R}^N\right)\vert_{\mathbb S^{N-1}}.\end{equation}

Consider the weight function ${\vartheta_{k,a}}\left( x
\right)={{\left| x \right|}^{a-2}}h_k(x)$. It reduces to
$h_k(x)$ when $a=2$ and for any $x'\in \mathbb S^{N-1}$, $${\vartheta_{k,a}}\left( x'
\right)=h_k\left(x'\right).$$ For the polar coordinates $x=rx'(r>0,\;x'\in
\mathbb S^{N-1})$,
$$\vartheta_{k,a}\left(x\right)dx=r^{2\left\langle k\right\rangle+N+a-3}\vartheta_{k,a}\left(x'\right)drd\sigma(x').$$

From the spherical harmonic decomposition \eqref{decomposition} of
$L^2\left(\mathbb S^{N-1},h_k\left(x'\right)d\sigma(x')\right)$, there is
a unitary isomorphism (see \cite[(3.25)]{BSK2})
$$\sum_{m\in \mathbb N}^\oplus(\mathcal H_k^m\left(\mathbb{R}^N\right){\vert_{\mathbb S^{N-1}})\otimes L^2}\left({\mathbb{R}}_+,r^{\;2\left\langle k\right\rangle+N+a-3}dr\right)\xrightarrow\sim L^2\left(\mathbb{R}^N,\vartheta_{k,a}\left(x\right)dx\right).$$

Define the Laguerre polynomial as
$$L_l^\mu(t):=\sum_{j=0}^l\frac{{(-1)}^j\Gamma(\mu+l+1)}{(l-j)!\Gamma(\mu+j+1)}\frac{t^j}{j!},\;\mathrm{Re}\mu>-1.$$

\begin{prop}\label{basis}(\cite[Proposition 3.15]{BSK2})
For fixed $m\in\mathbb{N}$, $a>0$, and a multiplicity function $k$
satisfying $\lambda_{k,a,m}>-1$. Set
\begin{equation}\label{psi}\psi_{l,m}^{(a)}(r):=\left(\frac{2^{\lambda_{k,a,m}+1}\Gamma(l+1)}{a^{\lambda_{k,a,m}}\Gamma(\lambda_{k,a,m}+l+1)}\right)^{1/2}r^mL_l^{\lambda_{k,a,m}}\left(\frac2ar^a\right)\mathrm{exp}\left(-\frac1ar^a\right).\end{equation}
Then $\left\{\psi_{l,m}^{(a)}(r):\;l\in\mathbb{N}\right\}$ forms
an orthonormal basis in
$L^2\left({\mathbb{R}}_+,r^{\;2\left\langle
k\right\rangle+N+a-3}dr\right)$.
\end{prop}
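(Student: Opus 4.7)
The plan is to reduce the statement to the classical orthonormality and completeness of Laguerre functions in $L^2((0,\infty),t^\mu\,dt)$ via an appropriate unitary change of variable. Throughout I abbreviate $\lambda:=\lambda_{k,a,m}$ (so $a\lambda=2m+2\langle k\rangle+N-2$) and denote the weight $w(r):=r^{2\langle k\rangle+N+a-3}$.

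First I would introduce the substitution $t=\tfrac{2}{a}r^{a}$, equivalently $r=(at/2)^{1/a}$. A direct computation gives
$r^{2m}\,w(r)\,dr=r^{a\lambda+a-1}\,dr=\tfrac{1}{2}(at/2)^{\lambda}\,dt.$
This identity motivates the unitary isomorphism
$V:L^{2}((0,\infty),w(r)\,dr)\longrightarrow L^{2}((0,\infty),t^{\lambda}\,dt),\qquad (Vf)(t)=\Bigl(\tfrac{a^{\lambda}}{2^{\lambda+1}}\Bigr)^{1/2}(at/2)^{-m/a}f\bigl((at/2)^{1/a}\bigr),$
whose unitarity is exactly the computation above (the $r^{m}$ factor inside $\psi_{l,m}^{(a)}$ is absorbed by the $(at/2)^{-m/a}$ in $V$, and the Jacobian $\tfrac{1}{2}(at/2)^{\lambda}$ reproduces the weight $t^{\lambda}$ up to the constant $\tfrac{a^{\lambda}}{2^{\lambda+1}}$).

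Second, I would apply $V$ to the functions $\psi_{l,m}^{(a)}$ defined by \eqref{psi}. Plugging in and collecting constants, the $r^{m}$ becomes $(at/2)^{m/a}$, which cancels the $(at/2)^{-m/a}$ from $V$; the exponential $e^{-r^{a}/a}$ becomes $e^{-t/2}$; and the argument of the Laguerre polynomial becomes $t$. The resulting image is
$(V\psi_{l,m}^{(a)})(t)=\Bigl(\tfrac{\Gamma(l+1)}{\Gamma(\lambda+l+1)}\Bigr)^{1/2}L_{l}^{\lambda}(t)\,e^{-t/2},$
where the constants in the prefactor of \eqref{psi} are tuned precisely so that $\tfrac{2^{\lambda+1}}{a^{\lambda}}\cdot\tfrac{a^{\lambda}}{2^{\lambda+1}}=1$ and only the Laguerre normalization survives.

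Third, I would invoke the classical orthogonality and completeness of the Laguerre polynomials: for $\lambda>-1$ one has $\int_{0}^{\infty}L_{l}^{\lambda}(t)L_{l'}^{\lambda}(t)t^{\lambda}e^{-t}\,dt=\tfrac{\Gamma(\lambda+l+1)}{l!}\delta_{l,l'}$, and $\{L_{l}^{\lambda}(t)e^{-t/2}\}_{l\in\mathbb{N}}$ (with the shown normalization) is an orthonormal basis of $L^{2}((0,\infty),t^{\lambda}\,dt)$; completeness follows from the density of polynomials in $L^{2}(e^{-t}t^{\lambda}\,dt)$ via the finiteness of the Laguerre moment-problem, or equivalently from the fact that the Laguerre heat semigroup is self-adjoint with compact resolvent and purely discrete spectrum. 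The hypothesis $\lambda_{k,a,m}>-1$ is exactly what is needed to apply this. Transporting this basis back through the unitary $V^{-1}$ yields the claim: $\{\psi_{l,m}^{(a)}\}_{l\in\mathbb{N}}$ is an orthonormal basis of $L^{2}((0,\infty),w(r)\,dr)$.

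The only real obstacle is bookkeeping of the constants and powers of $a$, $2$, $\lambda$, $m$ in the change of variable, and verifying that the prefactor in \eqref{psi} is the correct one — but nothing conceptual beyond the classical Laguerre theory is required.
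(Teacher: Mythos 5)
Your proof is correct and is essentially the argument behind the cited result: the paper itself gives no proof of Proposition~\ref{basis} (it is quoted from \cite[Proposition 3.15]{BSK2}), and the proof there is exactly this reduction — absorb the $r^m$ factor, substitute $t=\frac2a r^a$ so that $r^{a\lambda_{k,a,m}+a-1}\,dr$ becomes a constant multiple of $t^{\lambda_{k,a,m}}\,dt$, and invoke the classical orthonormality and completeness of the Laguerre functions $L^{\lambda}_l(t)e^{-t/2}$ in $L^2((0,\infty),t^{\lambda}\,dt)$ for $\lambda>-1$. Your constant bookkeeping checks out, so nothing further is needed.
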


For each fixed $m\in\mathbb{N}$, we take an orthonormal basis of $\mathcal H_k^m\left(\mathbb{R}^N\right)\vert_{\mathbb S^{N-1}}$ as
\begin{equation}\label{Y}\left\{Y_i^m:\;i=1,2,\cdots,d(m)\right\},\end{equation}
where
$d(m)=\dim\left(\mathcal H_k^m\left(\mathbb{R}^N\right)\vert_{\mathbb S^{N-1}}\right)$. They are the eigenvectors of the generalized Laplace--Beltrami
operator $\triangle_{k;0}$. Proposition \ref{basis}
yields the orthonormal basis in ${{L}^{2}}\left(
{{\mathbb{R}}^{N}},{\vartheta_{k,a}}\left( x \right)dx \right)$
immediately.
\begin{cor}(\cite[Corollary 3.17]{BSK2})
Suppose $a>0$ and $k$ satisfy that $2m+2\left\langle
k\right\rangle+N+a-2>0$, Set
\begin{equation}\label{Phi}\mathrm\Phi_{l,m,j}^{(a)}\left(x\right):=Y_j^m\left(\frac x{\left|x\right|}\right)\psi_{l,m}^{(a)}\left(\left|x\right|\right).\end{equation}
Then \begin{equation}\label{basis4}\left\{\left.\mathrm\Phi_{l,m,j}^{(a)}\right| l\in\mathbb{N},\;m\in\mathbb{N},\;j=1,2,\cdots,d(m)\right\}\end{equation}
forms an orthonormal basis of ${{L}^{2}}\left(
{{\mathbb{R}}^{N}},{\vartheta_{k,a}}\left( x \right)dx \right)$.
\end{cor}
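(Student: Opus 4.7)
The plan is to deduce the claim directly from Proposition \ref{basis} together with the spherical $h$-harmonic decomposition \eqref{decomposition} and the polar form of the measure $\vartheta_{k,a}(x)dx$, by pulling everything back through a tensor-product unitary isomorphism.

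First I would record the polar form $\vartheta_{k,a}(x)dx=r^{2\left\langle k\right\rangle+N+a-3}dr\,\vartheta_{k,a}(x')d\sigma(x')$, using that $\vartheta_{k,a}(x')=h_k(x')$ on $\mathbb S^{N-1}$. This yields the tensor factorization
\[
L^2\!\left(\mathbb{R}^N,\vartheta_{k,a}(x)dx\right)\cong L^2\!\left(\mathbb S^{N-1},h_k(x')d\sigma(x')\right)\otimes L^2\!\left(\mathbb{R}_+,r^{\,2\left\langle k\right\rangle+N+a-3}dr\right),
\]
under which the function $\Phi_{l,m,j}^{(a)}$ in \eqref{Phi} corresponds to the elementary tensor $Y_j^m\otimes \psi_{l,m}^{(a)}$.

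Next I would apply \eqref{decomposition} to the spherical factor, rewriting the right-hand side as the Hilbert direct sum
\[
\bigoplus_{m\in\mathbb{N}}\bigl(\mathcal H_k^m(\mathbb{R}^N)\vert_{\mathbb S^{N-1}}\bigr)\otimes L^2\!\left(\mathbb{R}_+,r^{\,2\left\langle k\right\rangle+N+a-3}dr\right),
\]
which is precisely the unitary isomorphism (3.25) of \cite{BSK2} recalled in the text. For each fixed $m\in\mathbb{N}$, the family $\{Y_j^m\}_{j=1}^{d(m)}$ from \eqref{Y} is by construction an orthonormal basis of the first factor, and Proposition \ref{basis} supplies $\{\psi_{l,m}^{(a)}\}_{l\in\mathbb{N}}$ as an orthonormal basis of the second factor, exactly under the hypothesis $\lambda_{k,a,m}>-1$, i.e. $2m+2\left\langle k\right\rangle+N+a-2>0$. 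Tensor products of basis elements then form an orthonormal basis of each summand, and taking the union over $m$ gives an orthonormal basis of the whole direct sum; transporting this family by the inverse of the unitary isomorphism reproduces exactly \eqref{basis4}.

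The only subtlety worth flagging is that Proposition \ref{basis} must be applicable for every $m$ occurring in the decomposition. Since $\lambda_{k,a,m}=\frac{2m+2\left\langle k\right\rangle+N-2}{a}$ is nondecreasing in $m$ once $k\ge 0$ and $a>0$ are fixed, it suffices that the standing hypothesis hold at the smallest $m$ with $\mathcal H_k^m(\mathbb{R}^N)\neq\{0\}$, and it then propagates automatically to all larger $m$. Apart from this verification the argument is a routine combination of ingredients and I do not expect any genuine obstacle.
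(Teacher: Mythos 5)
Your proposal is correct and follows essentially the same route as the paper, which simply observes that Proposition \ref{basis} combined with the unitary isomorphism $\sum_{m\in \mathbb N}^\oplus(\mathcal H_k^m(\mathbb{R}^N)\vert_{\mathbb S^{N-1}})\otimes L^2(\mathbb{R}_+,r^{\,2\langle k\rangle+N+a-3}dr)\xrightarrow{\sim} L^2(\mathbb{R}^N,\vartheta_{k,a}(x)dx)$ yields the basis immediately. Your added remark on the monotonicity of $\lambda_{k,a,m}$ in $m$ is a correct and harmless elaboration of why the hypothesis suffices for all spherical components.
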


\subsection{The $(k,a)$-generalized Laguerre semigroup and Fourier transform}
\

Denote by
$$W_{k,a}\left(\mathbb{R}^N\right):=\mathbb{C}\text{-}span\left\{\left.\mathrm\Phi_{\mathit l}^{\mathit(\mathit a\mathit)}\mathit{(p,\cdot)}\right|l\in\mathbb{N},\;m\in\mathbb{N},\;p\in\mathcal H_k^m\left(\mathbb{R}^N\right)\right\},$$ where \begin{equation*}\label{p}\mathrm\Phi_l^{(a)}\left(p,x\right)=p(x')r^mL_l^{\lambda_{k,a,m}}\left(\frac2ar^a\right)\mathrm{exp}\left(-\frac1ar^a\right)\end{equation*} for $x=rx'\;(r>0,\;x'\in
\mathbb S^{N-1})$.
It
is a dense subset of the Hilbert space ${{L}^{2}}\left(
{{\mathbb{R}}^{N}},{\vartheta_{k,a}}\left( x \right)dx \right)$.
Define the $a$-deformed Dunkl-type harmonic oscillator with domain
$W_{k,a}\left(\mathbb{R}^N\right)$ as follows (see \cite{BSK1,BSK2}),
$${{\Delta }_{k,a}}={{\left| x \right|}^{2-a}}{{\Delta }_{k}}-{{\left| x \right|}^{a}},a>0.$$
It is an essentially self-adjoint operator on ${{L}^{2}}\left(
{{\mathbb{R}}^{N}},{\vartheta_{k,a}}\left( x \right)dx \right)$ with only negative discrete spectrum. 
And so $\frac1a\triangle_{k,a}$ is the infinitesimal generator of the $(k,a)$-generalized Laguerre semigroup ${\mathcal I}_{k,a}\left(z\right):=\exp \left(\frac za\triangle_{k,a}\right),\;\Re z\geq0$.
The semigroup ${\mathcal I}_{k,a}\left(z\right)$ can also be defined by a unitary representation, i.e., ${\mathcal I}_{k,a}\left(z\right):=\Omega_{k,a}^{}\left(\gamma_z\right),\;\Re z\geq0$ (see \cite{BSK2} for the detailed definition of $\Omega_{k,a}^{}\left(\gamma_z\right)$).
By Schwartz kernel theorem,  ${\mathcal I}_{k,a}\left(z\right)$ has an integral
representation by means of a distribution kernel $\Lambda_{k,a}\left(x,y;z\right)$.
We refer to \cite{BSK2} for the details on the distribution kernel.
The boundary value $z=\frac{\pi i}2$ of ${\mathcal I}_{k,a}\left(z\right)$ gives the definition of the $(k,a)$-generalized Fourier transform 
$F_{k,a}$. The
operator ${{F}_{k,a}}$ is a bijective linear operator such that
the Plancherel formula holds for $f\in {{L}^{2}}\left(
{{\mathbb{R}}^{N}},{\vartheta_{k,a}}\left( x \right)dx \right)$.

\section{A Hardy inequality for the fractional $a$-deformed Laguerre Operator}

\

The Laguerre translation $\mathcal T_r^{\alpha}$ was introduced by McCully \cite{Mc}
for $\alpha=0$ and was extended to $\alpha \geq -1/2$ (see \cite{A} or \cite[Chapter 6]{T1}). 
We define the \textit{a-deformed Laguerre translation} as
\begin{align*}\mathcal T_r^{a,\alpha}f(s):=&\frac{\Gamma(\alpha+1)2^\alpha}{\sqrt{2\pi}}\int_0^\pi f\left(\left(r^a+s^a+2r^{{}^\frac a2}s^{{}^\frac a2}\cos\theta\right)^{1/a}\right)\cdot\\& J_{\alpha-1/2}\left(\frac2ar^{{}^\frac a2}s^{{}^\frac a2}\sin\theta\right)\left(\frac2ar^{{}^\frac a2}s^{{}^\frac a2}\sin\theta\right)^{-(\alpha-1/2)}\left(\sin\theta\right)^{2\alpha}\,d\theta\end{align*}
for $r,s>0$ and $\alpha \geq -1/2$, where $J_{\nu}$ is the Bessel function of order $\nu$.
When $a=2$, it reduces to the Laguerre translation $\mathcal T_r^{\alpha}$ in \cite{CR}.
The results in \cite{CR} are also valid for the critical case when $\alpha=-1/2$ since the definition of the Laguerre translation can be extended to this case.
If $f$ and $g$ are functions defined on $(0,\infty)$, the \textit{a-deformed Laguerre convolution} $f\ast_{a,\al}g$ is given by
\begin{equation}
\label{eq:LagConv}
f\ast_{a,\al}g(r)=\int_0^{\infty}\mathcal{T}_r^{a,\al}f(s)g(s)s^{a\al+a-1}\,ds.
\end{equation}
By changing variables
$$r=\left(\frac a2\right)^{1/a}r_1^{2/a},\;\;s=\left(\frac a2\right)^{1/a}s_1^{2/a}$$ 
and setting
$$f_1=f\left(\left(\frac a2\right)^{1/a}\left(\cdot\right)^{2/a}\right),\;\;g_1=g\left(\left(\frac a2\right)^{1/a}\left(\cdot\right)^{2/a}\right),$$ 
we have
\begin{align*}\int_0^\infty\mathcal T_r^{a,\alpha}f(s)g(s)s^{a\alpha+a-1}\,ds&=\left(\frac a2\right)^{\alpha+1}\int_0^\infty\mathcal T_{r_1}^\alpha f_1(s_1)g_1(s_1)s_1^{2\alpha+1}\,ds_1=\left(\frac a2\right)^{\alpha+1}f_1\ast_\alpha g_1(r_1)\\&=\left(\frac a2\right)^{\alpha+1}g_1\ast_\alpha f_1(r_1)=\left(\frac a2\right)^{\alpha+1}\int_0^\infty\mathcal T_{r_1}^\alpha g_1(s_1)f_1(s_1)s_1^{2\alpha+1}\,ds_1\\&=\int_0^\infty\mathcal T_r^{a,\alpha}g(s)f(s)s^{a\alpha+a-1}\,ds,\end{align*}
where $f\ast_{\al}g$ is the Laguerre convolution defined in \cite[Chapter 6]{T1}.  Thus $f\ast_{a,\alpha}g(r)=g\ast_{a,\alpha}f(r)$.

Let \begin{equation*}
\varphi_l^{a,\al}(r):=L_l^\alpha\left(\frac2ar^a\right)\mathrm{exp}\left(-\frac1ar^a\right),\quad l=0,1,\cdots.
\end{equation*}
Then substituting $r$ as $\sqrt{\frac2a}r^\frac a2$ and $s$ as $\sqrt{\frac2a}s^\frac a2$ in the formula (3.2) in \cite{CR}, we get
\begin{equation}
\label{eq:Lagtransvarphi}
\mathcal{T}_r^{a,\alpha} \varphi_n^{a,\al}(s)=\frac{n!}{(\alpha+1)_n}
\varphi_n^{a,\al}(r)\varphi_n^{a,\al}(s),\qquad \alpha \geq -1/2.
\end{equation}

The Laguerre operator 
\begin{equation}\label{Laguerre0}L_\alpha=-\frac{d^2}{dr^2}+r^2-\frac{2\alpha+1}r\frac d{dr}
\end{equation}
studied in \cite{CR} is a symmetric operator on $L^2\left(\left(0,\infty\right),d\mu_\alpha\right)$, where $\alpha \geq -1/2$ and $d\mu_\alpha\left(r\right)=r^{2\alpha+1}dr$. The functions 
$$\widetilde{\varphi}_l^\alpha(r)=\left(\frac{2\Gamma(l+1)}{\Gamma(\alpha+l+1)}\right)^{1/2}L_l^\alpha\left(r^2\right)\mathrm{exp}\left(-\frac12r^2\right),\;l=0,1,\cdots$$
are eigenfunctions of $L_\alpha$ with eigenvalues $2\left(2l+\alpha+1\right)$.

Substituting $r$ by $u=\sqrt{\frac2a}r^\frac a2$ in
\eqref{Laguerre0},
\begin{align*}-\frac{d^2}{du^2}+u^2-\frac{2\alpha+1}u\frac d{du}&=-\frac2a\left(\frac1{r^{a-2}}\frac{d^2}{dr^2}+\left(1-\frac a2\right)\frac1{r^{a-1}}\frac d{dr}\right)+\frac2ar^a-\frac{2\alpha+1}{r^{a-1}}\frac d{dr}\\&=\frac2a\left(-\frac1{r^{a-2}}\frac{d^2}{dr^2}+r^a-\left(a\alpha+1\right)\frac1{r^{a-1}}\frac d{dr}\right).
\end{align*}
The $a$-deformed Laguerre differential
operator can then be defined as
\begin{equation}\label{Laguerre}
L_{a,\alpha}=-\frac1{r^{a-2}}\frac{d^2}{dr^2}+r^a-\left(a\alpha+1\right)\frac1{r^{a-1}}\frac d{dr}.
\end{equation}
It is symmetric on $L^2\left(0,\infty\right)$ with respect to the
measure $d\mu_{a,\alpha}(r)=r^{a\alpha+a-1}dr$,  $\alpha \geq -1/2$. When $a=2$, the
operator reduces to the Laguerre operator \eqref{Laguerre0}.

Define the Laguerre functions of type $\alpha$ as
$$\widetilde{\varphi}_l^{a,\alpha}(r)=\left(\frac{2^{\alpha+1}\Gamma(l+1)}{a^\alpha\Gamma(\alpha+l+1)}\right)^{1/2}L_l^\alpha\left(\frac2ar^a\right)\mathrm{exp}\left(-\frac1ar^a\right),\;l=0,1,\cdots,$$
where $\alpha \geq -1/2$. Then they form an
orthonormal basis of
$L^2\left(\left(0,\infty\right),d\mu_{a,\alpha}\right)$ (this is also the case of Proposition 2.1 when $\alpha=\lambda_{k,a,m}$) and are the eigenfunctions of the $a$-deformed Laguerre operator
\eqref{Laguerre}. Indeed,
$$L_{a,\alpha}\widetilde{\varphi}_l^{a,\alpha}=a\left(2l+\alpha+1\right)\widetilde{\varphi}_l^{a,\alpha},\;\;l=0,1,\cdots.$$
It suffices to substitute $r$ by $\sqrt{\frac2a}r^\frac a2$ in the conclusions of \cite[Section 3]{CR} to get this.

The Laguerre expansion of $ f \in L^2((0,\infty), d\mu_{a,\al})$, namely the expansion
$$f=\sum_{l=0}^\infty\left(\frac{2^{\alpha+1}\Gamma(l+1)}{a^\alpha\Gamma(\alpha+l+1)}\right){\left\langle f,\varphi_l^{a,\alpha}\right\rangle}_{d\mu_{a,\alpha}}\varphi_l^{a,\alpha}$$ can be written in a compact form in terms of Laguerre convolution.
\begin{lem}
For a function $f \in L^2((0,\infty),d\mu_{a,\al}) $,  $\varphi_l^{a,\alpha}$ is an eigenfunction of $f$, i.e.
$$
f\ast_{a,\alpha}\varphi_l^{a,\alpha}=\frac{\Gamma(\alpha+1)\Gamma(l+1)}{\Gamma(\alpha+l+1)}{\left\langle f,\varphi_l^{a,\alpha}\right\rangle}_{d\mu_{a,\alpha}}\varphi_l^{a,\alpha}.
$$
In particular,
\begin{equation}
\label{eq:convoLaguerres}
\delta_{nj}\varphi_n^{a,\al}=\frac{2^{\alpha+1}}{a^\alpha\Gamma(\al+1)}\varphi_n^{a,\al}\ast_{a,\al}\varphi_j^{a,\al}.
\end{equation}
\end{lem}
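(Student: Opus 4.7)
The plan is to obtain both formulas as a direct consequence of the product rule \eqref{eq:Lagtransvarphi} for $\mathcal{T}_r^{a,\alpha}$ acting on $\varphi_l^{a,\alpha}$, together with the commutativity $f\ast_{a,\alpha}g=g\ast_{a,\alpha}f$ already established on the preceding page. The whole statement is really a translation of the classical Laguerre convolution identity of \cite[Chapter 6]{T1} through the substitution $r\mapsto\sqrt{2/a}\,r^{a/2}$, so no deep analysis is needed once the ingredients are in place.

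First I would write
\begin{equation*}
f\ast_{a,\alpha}\varphi_l^{a,\alpha}(r)=\varphi_l^{a,\alpha}\ast_{a,\alpha}f(r)=\int_0^\infty \mathcal{T}_r^{a,\alpha}\varphi_l^{a,\alpha}(s)\,f(s)\,s^{a\alpha+a-1}\,ds,
\end{equation*}
which is allowed by commutativity and by the definition \eqref{eq:LagConv}. Applying \eqref{eq:Lagtransvarphi} inside the integral pulls $\varphi_l^{a,\alpha}(r)$ outside and leaves the $L^2(d\mu_{a,\alpha})$-pairing of $f$ with $\varphi_l^{a,\alpha}$:
\begin{equation*}
f\ast_{a,\alpha}\varphi_l^{a,\alpha}(r)=\frac{l!}{(\alpha+1)_l}\,\varphi_l^{a,\alpha}(r)\,\langle f,\varphi_l^{a,\alpha}\rangle_{d\mu_{a,\alpha}}.
\end{equation*}
Rewriting $l!/(\alpha+1)_l=\Gamma(l+1)\Gamma(\alpha+1)/\Gamma(\alpha+l+1)$ gives the first displayed identity.

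For the second identity I would specialize the first to $f=\varphi_j^{a,\alpha}$ and then compute the inner product $\langle\varphi_j^{a,\alpha},\varphi_l^{a,\alpha}\rangle_{d\mu_{a,\alpha}}$ by using that $\{\widetilde\varphi_l^{a,\alpha}\}$ is an orthonormal basis of $L^2((0,\infty),d\mu_{a,\alpha})$, noting that
\begin{equation*}
\widetilde\varphi_l^{a,\alpha}(r)=\left(\frac{2^{\alpha+1}\Gamma(l+1)}{a^\alpha\Gamma(\alpha+l+1)}\right)^{1/2}\varphi_l^{a,\alpha}(r).
\end{equation*}
This yields $\langle\varphi_j^{a,\alpha},\varphi_l^{a,\alpha}\rangle_{d\mu_{a,\alpha}}=\delta_{jl}\,a^\alpha\Gamma(\alpha+l+1)/(2^{\alpha+1}\Gamma(l+1))$, and substituting this into the first identity collapses the constant to $a^\alpha\Gamma(\alpha+1)/2^{\alpha+1}$, giving exactly \eqref{eq:convoLaguerres} after rearrangement.

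There is no genuine obstacle: every computation is algebraic. The only small point worth checking is that the orthogonality and the product formula \eqref{eq:Lagtransvarphi} are valid at the critical endpoint $\alpha=-1/2$; both are fine since the text explicitly extends the relevant statements from \cite{CR} to $\alpha\ge -1/2$ via the substitution $r\mapsto\sqrt{2/a}\,r^{a/2}$.
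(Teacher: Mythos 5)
Your argument is correct and is precisely the argument the paper has in mind: the paper omits the proof, deferring to the proof of Lemma 3.1 in \cite{CR}, and your computation (commutativity of $\ast_{a,\alpha}$, the product formula \eqref{eq:Lagtransvarphi}, then orthonormality of $\widetilde\varphi_l^{a,\alpha}$ to evaluate $\langle\varphi_j^{a,\alpha},\varphi_l^{a,\alpha}\rangle_{d\mu_{a,\alpha}}$) is exactly that proof transported through the substitution $r\mapsto\sqrt{2/a}\,r^{a/2}$. Nothing is missing; the constants check out, including the collapse to $a^\alpha\Gamma(\alpha+1)/2^{\alpha+1}$ in \eqref{eq:convoLaguerres}.
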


\begin{proof}
Omitted. It is only a slight modification of the proof of Lemma 3.1 in \cite{CR}.
\end{proof}

Thus $f\ast_{a,\al}\varphi_l^{a,\al}$ are eigenfunctions of $L_{a,\alpha}$ with the eigenvalues $a\left(2l+\alpha+1\right)$ for $l=0,1,\cdots$ and we have the spectral decomposition of the 
$a$-deformed Laguerre operator
$$L_{a,\alpha}f=\frac{2^{\alpha+1}}{a^\alpha\Gamma(\alpha+1)}\sum_{l=0}^\infty a\left(2l+\alpha+1\right)f\ast_{a,\alpha}\varphi_l^{a,\alpha}.$$
It is then natural to define fractional powers of Laguerre operators as
$$L_{a,\alpha}^\sigma f=\frac{2^{\alpha+1}}{a^\alpha\Gamma(\alpha+1)}\sum_{l=0}^\infty\left(a\left(2l+\alpha+1\right)\right)^\sigma f\ast_{a,\alpha}\varphi_l^{a,\alpha},\quad \alpha \geq -1/2.$$
But it suits better to work with the modified fractional
operator $L_{a,\alpha;\sigma}$ with the spectrum $4^\sigma S_l^{a,\alpha;\sigma}$, i.e.
$$L_{a,\alpha,\sigma}f=\frac{2^{\alpha+1}}{a^\alpha\Gamma(\alpha+1)}\sum_{l=0}^\infty(2a)^\sigma S_l^{a,\alpha;\sigma}f\ast_{a,\alpha}\varphi_l^{a,\alpha},\quad \alpha \geq -1/2,$$
where
$$S_l^{a,\alpha;\sigma}=\frac{\Gamma\left(\frac{a\left(2l+\alpha+1\right)}{2a}+\frac{1+\sigma}2\right)}{\Gamma\left(\frac{a\left(2l+\alpha+1\right)}{2a}+\frac{1-\sigma}2\right)},$$
because such fractional powers of the operator correspond to the conformally invariant fractional powers of sublaplacian $\mathcal L$ on Heisenberg groups when we consider the conformally invariant fractional powers $\mathcal L_\sigma$ (see \cite{Ron}) acting on the functions of the form $e^{it}f\left(\left|z\right|\right)$.
In short, we write
$$L_{a,\alpha;\sigma}={(2a)}^\sigma\frac{\Gamma\left(\frac{L_{a,\alpha}}{2a}+\frac{1+\sigma}2\right)}{\Gamma\left(\frac{L_{a,\alpha}}{2a}+\frac{1-\sigma}2\right)}.$$
The motivation for this definition goes back to \cite[(1.33)]{Br}, for instance.

For $\delta>0$ and $\alpha \geq -1/2$, denote
$$\omega_{\alpha,\sigma}^{\delta,a}(r):=c_{\alpha,\sigma}\left(\delta+{\textstyle\frac2a}r^a\right)^{-(\alpha+1+\sigma)/2}K_{(\alpha+1+\sigma)/2}\left(\frac{\delta+\frac2ar^a}2\right),$$
where $K_\nu$ is the Macdonald's function of order $\nu$ (see
\cite[Chapter 5, Section 5.7]{L}), and $c_{\alpha,\sigma}$ is the
constant
$$c_{\alpha,\sigma}:=\frac{\sqrt{\mathrm\pi}2^{1-\sigma}}{\Gamma\left((\alpha+2+\sigma)/2\right)}.$$
In \cite{CR}, the authors proved a Hardy inequality for the
fractional Laguerre operator for the case of $a=2$ using ground state representation.
\begin{thm}(\cite[Theorem 1.1]{CR})
Let $0<\sigma<1,\;\delta>0$, and $2\alpha+1>0$. Then
$$B_{\alpha,\sigma}^\delta\int_0^\infty\frac{\left|f(r)\right|^2}{\left(\delta+r^2\right)^\sigma}d\mu_\alpha(r)\leq\frac{4^\sigma}{\delta^\sigma}\left(B_{\alpha,\sigma}^\delta\right)^2\int_0^\infty\left|f(r)\right|^2\frac{\omega_{\alpha,\sigma}^\delta(r)}{\omega_{\alpha,-\sigma}^\delta(r)}d\mu_\alpha(r)\leq{\left\langle L_{\alpha,\sigma}^{}f,f\right\rangle}_{d\mu_\alpha}$$
for all $f\in C_0^\infty(0,\infty)$.
\end{thm}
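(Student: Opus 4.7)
The plan is to prove the Hardy inequality by the ground-state representation method of Frank--Lieb--Seiringer, adapted to the fractional Laguerre operator $L_{\alpha,\sigma}$, whose spectrum on the orthonormal basis $\{\widetilde{\varphi}_l^\alpha\}$ is $\{4^\sigma S_l^{\alpha,\sigma}\}$. The central guess is that $\phi(r):=\omega_{\alpha,-\sigma}^\delta(r)$ plays the role of a distinguished positive ground state, in the sense that
$$\frac{L_{\alpha,\sigma}\phi(r)}{\phi(r)}=\frac{4^\sigma\bigl(B_{\alpha,\sigma}^\delta\bigr)^2}{\delta^\sigma}\cdot\frac{\omega_{\alpha,\sigma}^\delta(r)}{\omega_{\alpha,-\sigma}^\delta(r)},$$
so the quotient $L_{\alpha,\sigma}\phi/\phi$ is precisely the weight appearing in the middle term of the theorem.

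First I would derive a bilinear-form representation
$$\langle L_{\alpha,\sigma}f,f\rangle_{d\mu_\alpha}=\iint|f(r)-f(s)|^2K_\sigma(r,s)\,d\mu_\alpha(r)\,d\mu_\alpha(s)$$
with a nonnegative symmetric Laguerre-convolution kernel $K_\sigma$, by applying an appropriate integral representation of the gamma ratio $\Gamma(z+\tfrac{1+\sigma}{2})/\Gamma(z+\tfrac{1-\sigma}{2})$ to the operator $z=L_\alpha/4$ and lifting the resulting integral to the Laguerre heat semigroup $e^{-tL_\alpha}$, which acts on $L^2((0,\infty),d\mu_\alpha)$ by Laguerre convolution with an explicit positive Mehler-type kernel.

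Next, I would substitute $f=\phi g$ in this bilinear form. Using the algebraic identity
$$|f(r)-f(s)|^2=\phi(r)\phi(s)|g(r)-g(s)|^2+\bigl(\phi(r)-\phi(s)\bigr)\bigl(|g(r)|^2\phi(r)-|g(s)|^2\phi(s)\bigr),$$
and the symmetry of $K_\sigma$, the second term integrates (after symmetrization in $r\leftrightarrow s$) to the potential contribution, yielding the ground-state decomposition
$$\langle L_{\alpha,\sigma}f,f\rangle=\int_0^\infty\frac{L_{\alpha,\sigma}\phi(r)}{\phi(r)}|f(r)|^2\,d\mu_\alpha(r)+\iint\phi(r)\phi(s)|g(r)-g(s)|^2K_\sigma(r,s)\,d\mu_\alpha(r)\,d\mu_\alpha(s).$$
The double integral is manifestly nonnegative, so the eigenfunction identity for $\phi$ gives the second inequality of the theorem. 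For the first inequality, a direct calculation using the explicit formulas for $\omega_{\alpha,\pm\sigma}^\delta$ reduces it to the pointwise bound $K_{(\alpha+1+\sigma)/2}(u)\ge K_{(\alpha+1-\sigma)/2}(u)$ for $u>0$, which holds because $K_\nu(u)$ is increasing in $\nu$ for $\nu\ge 0$.

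The main obstacle is the eigenfunction identity for $\phi=\omega_{\alpha,-\sigma}^\delta$. Since $K_{(\alpha+1-\sigma)/2}((\delta+r^2)/2)$ arises naturally as a Laplace transform in a spectral parameter of the Laguerre heat kernel, $\omega_{\alpha,-\sigma}^\delta$ belongs to the resolvent family of $L_\alpha$ of fractional order $\sigma$. Making this quantitatively precise -- either through a Hille--Hardy-type expansion of $\omega_{\alpha,-\sigma}^\delta$ in the Laguerre basis with term-by-term matching of the gamma-ratio spectrum, or via a direct integral computation using the subordination formula -- is the core technical step and hinges on several special-function identities for products of Laguerre polynomials with Bessel and Macdonald functions.
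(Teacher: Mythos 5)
This theorem is not proved in the paper at all: it is quoted verbatim from \cite[Theorem 1.1]{CR}, and the paper only records that the authors of \cite{CR} establish it ``using ground state representation.'' Your outline --- taking $\phi=\omega_{\alpha,-\sigma}^\delta$ as ground state with $L_{\alpha,\sigma}\phi=\frac{4^\sigma}{\delta^\sigma}\bigl(B_{\alpha,\sigma}^\delta\bigr)^2\omega_{\alpha,\sigma}^\delta$, obtaining the Frank--Lieb--Seiringer decomposition from the heat-semigroup integral representation of the gamma-ratio spectral multiplier (the same representation as Lemma 4.2 of this paper), and deducing the first inequality from the monotonicity of $K_\nu(u)$ in $\nu$ --- is precisely the strategy of that cited proof, so it is essentially the same approach.
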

Taking $f$ as the Laguerre functions for the case of $a=2$, and then
substituting $r$ by $\sqrt{\frac2a}r^\frac a2$, we get
\begin{align*}B_{\alpha,\sigma}^\delta\int_0^\infty\frac{\left|\widetilde\varphi_l^{a,\alpha}(r)\right|^2}{\left(\delta+{\displaystyle\frac2a}r^a\right)^\sigma}d\mu_{a,\alpha}(r)\leq\frac{4^\sigma}{\delta^\sigma}\left(B_{\alpha,\sigma}^\delta\right)^2\int_0^\infty\left|\widetilde\varphi_l^{a,\alpha}(r)\right|^2\frac{\omega_{\alpha,\sigma}^{\delta,a}(r)}{\omega_{\alpha,-\sigma}^{\delta,a}(r)}d\mu_{a,\alpha}(r)\leq{\left\langle{\left(\frac2aL_{a,\alpha}\right)}_\sigma\widetilde\varphi_l^{a,\alpha},\widetilde\varphi_l^{a,\alpha}\right\rangle}_{d\mu_{a,\alpha}}\end{align*}
for $\alpha \geq -1/2$. 
Here ${\left(\frac2aL_{a,\alpha}\right)}_\sigma=4^\sigma\frac{\Gamma\left(\frac{\frac2aL_{a,\alpha}}4+\frac{1+\sigma}2\right)}{\Gamma\left(\frac{\frac2aL_{a,\alpha}}4+\frac{1-\sigma}2\right)}$ and it equals to $\left(\frac2a\right)^\sigma L_{a,\alpha;\sigma}$.

Then using the expansion via Laguerre
functions, we derive the Hardy inequality for the fractional $a$-deformed Laguerre operator.
\begin{thm}
Let $0<\sigma<1,\;\delta>0$, and $\alpha \geq -1/2$. Then
\begin{align*}\left(\frac a2\right)^\sigma B_{\alpha,\sigma}^\delta\int_0^\infty\frac{\left|f(r)\right|^2}{\left(\delta+{\displaystyle\frac2a}r^a\right)^\sigma}d\mu_{a,\alpha}(r)\leq\left(\frac{2a}\delta\right)^\sigma\left(B_{\alpha,\sigma}^\delta\right)^2\int_0^\infty\left|f(r)\right|^2\frac{\omega_{\alpha,\sigma}^{\delta,a}(r)}{\omega_{\alpha,-\sigma}^{\delta,a}(r)}d\mu_{a,\alpha}(r)\leq{\left\langle L_{a,\alpha;\sigma}f,f\right\rangle}_{d\mu_{a,\alpha}}\end{align*}
for all $f\in C_0^\infty(0,\infty)$.
\end{thm}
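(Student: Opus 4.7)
My plan is to derive the target inequality as a direct change-of-variables consequence of the preceding theorem (the $a=2$ case of Ciaurri--Roncal--Thangavelu), rather than reproving the ground-state representation in the $a$-deformed setting. The key observation is that the substitution $u = \sqrt{2/a}\, r^{a/2}$ is precisely the one already used in display (3.5) of the excerpt to pass from $L_\alpha$ to $L_{a,\alpha}$; it is a smooth diffeomorphism of $(0,\infty)$ onto itself and preserves $C_0^\infty$. For an arbitrary $f \in C_0^\infty(0,\infty)$ I would set $g(u) := f((a/2)^{1/a} u^{2/a})$ and apply the preceding theorem to $g$.

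The bulk of the work is then to track how each of the four ingredients of the preceding theorem transforms. A Jacobian computation yields $d\mu_\alpha(u) = (2/a)^\alpha\, d\mu_{a,\alpha}(r)$. Since $u^2 = (2/a)r^a$, the potential factor $(\delta+u^2)^{-\sigma}$ becomes exactly the $(\delta+(2/a)r^a)^{-\sigma}$ appearing in the target inequality. The Macdonald-function ratios satisfy $\omega_{\alpha,\pm\sigma}^\delta(u) = \omega_{\alpha,\pm\sigma}^{\delta,a}(r)$ pointwise, directly from the definitions. Finally, from the operator-level intertwining $L_\alpha \leftrightarrow (2/a)\, L_{a,\alpha}$ already recorded in the text, functional calculus promotes this to $L_{\alpha,\sigma} \leftrightarrow (2/a)^\sigma L_{a,\alpha;\sigma}$; this is exactly the identity $(\tfrac2a L_{a,\alpha})_\sigma = (\tfrac2a)^\sigma L_{a,\alpha;\sigma}$ that the authors isolate immediately after the displayed inequality on $\widetilde\varphi_l^{a,\alpha}$. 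Substituting these four transformations into the preceding theorem applied to $g$ and cancelling the common factor $(2/a)^\alpha$ from all three terms yields the desired three-term inequality, with the residual $\sigma$-power packaged as $(a/2)^\sigma$ on the left (and $(a/2)^\sigma \cdot 4^\sigma/\delta^\sigma = (2a/\delta)^\sigma$ in the middle, matching the statement).

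The main obstacle is justifying the operator identity $L_{\alpha,\sigma}|_u = (2/a)^\sigma L_{a,\alpha;\sigma}|_r$ rigorously on $C_0^\infty(0,\infty)$. The cleanest route is spectral: the substitution carries the Laguerre basis $\{\widetilde\varphi_l^\alpha\}$ to $\{\widetilde\varphi_l^{a,\alpha}\}$ up to an $l$-independent scalar coming from the Jacobian, while mapping the $L_\alpha$-eigenvalues $2(2l+\alpha+1)$ to the $L_{a,\alpha}$-eigenvalues $a(2l+\alpha+1)$ via the factor $a/2$; feeding this through the Gamma-ratio defining the modified fractional operator produces precisely the factor $(2/a)^\sigma$. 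With this operator identity in hand, the inequality for general $f \in C_0^\infty(0,\infty)$ follows immediately from the preceding theorem applied to $g$, bypassing any non-diagonal cross terms that a naive term-by-term Laguerre expansion of the weighted middle term would introduce. The critical case $\alpha=-1/2$ causes no additional trouble, since, as the excerpt already notes, the Laguerre translation and hence the whole Ciaurri--Roncal--Thangavelu framework extend to this value.
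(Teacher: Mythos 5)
Your proposal is correct and follows essentially the same route as the paper: both reduce Theorem 3.3 to the $a=2$ Hardy inequality of Ciaurri--Roncal--Thangavelu via the substitution $u=\sqrt{2/a}\,r^{a/2}$, using $d\mu_\alpha(u)=(2/a)^\alpha d\mu_{a,\alpha}(r)$, the pointwise identification of the weights $\omega^{\delta}_{\alpha,\pm\sigma}$ with $\omega^{\delta,a}_{\alpha,\pm\sigma}$, and the spectral identity $\bigl(\tfrac2a L_{a,\alpha}\bigr)_\sigma=(\tfrac2a)^\sigma L_{a,\alpha;\sigma}$ coming from the correspondence $\widetilde\varphi_l^{\alpha}\leftrightarrow\widetilde\varphi_l^{a,\alpha}$ and the eigenvalue rescaling $2(2l+\alpha+1)\mapsto a(2l+\alpha+1)$. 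If anything, your writeup is the more careful one, since you apply the $a=2$ theorem directly to the transformed test function $g\in C_0^\infty(0,\infty)$ rather than to the Laguerre eigenfunctions followed by an expansion, thereby explicitly avoiding the cross-term issue in the weighted integrals that a literal term-by-term summation would raise.
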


The holomorphic semigroup related to the $a$-deformed Laguerre operator
$L_{a,\alpha}$ is defined on
$L^2(\left(0,\infty\right),$
$d\mu_{a,\alpha})$ by
\begin{equation}
\label{eq:heatsemigroupLag}
I_{a,\al;z}f=e^{-\frac za L_{a,\al}}f,\quad \Re z\geq0.
\end{equation}
From the spectral decomposition of $L_{a,\al}$ it equals to $$\frac{2^{\alpha+1}}{a^\alpha\Gamma(\al+1)}\sum_{l=0}^{\infty}e^{-z(2l+\al+1)}
f\ast_{a,\al}\varphi_l^{a,\al}.$$

\

\noindent{\it Proof of Theorem 1.1}.

Define
\begin{align*}
q_{a,\al;z}(r):=\frac{2^{\alpha+1}}{a^\alpha\Gamma(\al+1)}\sum_{l=0}^{\infty}e^{-z(2l+\al+1)}\varphi_l^{a,\al}(r)=\left(\frac2a\right)^\alpha q_{2,\al;z}\left(\sqrt{\frac2a}r^\frac a2\right).
\end{align*}
Then we can write
$$
e^{-\frac zaL_{a,\al}}f=f\ast_{a,\al}q_{a,\al;z}.
$$
We give the kernel of the holomorphic semigroup $I_{a,\al;z}$.
\begin{lem}
\label{lem:Lagtranskernel}
Let $\alpha \geq -1/2$, $\Re z\geq 0$ and $z\neq 0$, we have that
$$
\mathcal{T}^{a,\al}_r q_{a,\al;z}(s)=\frac{e^{-\frac{\coth z}{a}(r^a+s^a)}}{(r^\frac a2s^\frac a2)^{\al} \sinh z}I_\al\left(\frac{\frac{2}{a}r^\frac a2s^\frac a2}{\sinh z}\right),
$$
where $I_{\al}$ is the modified Bessel function of the first
kind and order $\al$, see \cite[Chapter 5, Section 5.7]{L}.
\end{lem}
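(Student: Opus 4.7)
The plan is to reduce the identity to the corresponding Mehler-type formula for the classical Laguerre holomorphic semigroup (the case $a=2$, as established for instance in \cite[Chapter 6]{T1} and used in \cite{CR}), via the change of variables $u=\sqrt{2/a}\,r^{a/2}$ and $v=\sqrt{2/a}\,s^{a/2}$ that is deployed throughout this section to pass between the $a$-deformed and the classical settings.

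First I would check that this substitution intertwines $\mathcal{T}_r^{a,\alpha}$ with the classical Laguerre translation $\mathcal{T}_u^{\alpha}$. In the integrand defining $\mathcal{T}_r^{a,\alpha}$ one verifies the two algebraic identities
\[
r^a+s^a+2r^{a/2}s^{a/2}\cos\theta=\tfrac{a}{2}\bigl(u^2+v^2+2uv\cos\theta\bigr),\qquad \tfrac{2}{a}r^{a/2}s^{a/2}\sin\theta=uv\sin\theta,
\]
so that for any function of the form $f(w)=F\bigl(\sqrt{2/a}\,w^{a/2}\bigr)$ a direct comparison with the integral defining $\mathcal T_u^{\alpha}F(v)$ yields $\mathcal{T}_r^{a,\alpha}f(s)=\mathcal{T}_u^{\alpha}F(v)$.

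Since by construction $q_{a,\alpha;z}(w)=(2/a)^{\alpha}\,q_{2,\alpha;z}\bigl(\sqrt{2/a}\,w^{a/2}\bigr)$, the previous step reduces the lemma to the classical identity
\[
\mathcal{T}_u^{\alpha}q_{2,\alpha;z}(v)=\frac{e^{-(\coth z)(u^2+v^2)/2}}{(uv)^{\alpha}\sinh z}\,I_{\alpha}\!\left(\frac{uv}{\sinh z}\right),
\]
which is the Mehler kernel for the Laguerre holomorphic semigroup in the classical normalisation. This identity follows from Hille's generating function for the Laguerre polynomials together with the eigenfunction property \eqref{eq:Lagtransvarphi} specialised to $a=2$. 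Substituting back $u^2=\tfrac{2}{a}r^a$, $v^2=\tfrac{2}{a}s^a$ and $uv=\tfrac{2}{a}r^{a/2}s^{a/2}$, and multiplying by the overall factor $(2/a)^{\alpha}$, the powers of $2/a$ generated in the denominator $(uv)^\alpha$ cancel precisely against this prefactor, leaving exactly the right-hand side stated in the lemma.

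The main analytic subtlety lies on the boundary $\Re z=0$, $z\neq 0$, where the series defining $q_{a,\alpha;z}$ converges only distributionally; this is handled by first establishing the identity in the interior $\Re z>0$ (where absolute convergence of the Hille--Hardy series permits the term-by-term application of $\mathcal{T}_r^{a,\alpha}$ through \eqref{eq:Lagtransvarphi}) and then passing to the boundary by holomorphic continuation in $z$, using that both sides are holomorphic on $\Re z>0$ and admit boundary values in the appropriate topology on $L^2(d\mu_{a,\alpha})$. Apart from this bookkeeping, the reduction to the $a=2$ case is purely algebraic.
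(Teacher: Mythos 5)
Your proposal is correct and follows essentially the same route as the paper: the closed form for $a=2$ is obtained from the Hille--Hardy generating function with $w=e^{-z}$ (combined with the eigenfunction property \eqref{eq:Lagtransvarphi}), and the general case is reduced to $a=2$ by the substitution $u=\sqrt{2/a}\,r^{a/2}$, $v=\sqrt{2/a}\,s^{a/2}$ together with the relation $q_{a,\al;z}(w)=(2/a)^{\al}q_{2,\al;z}\bigl(\sqrt{2/a}\,w^{a/2}\bigr)$. Your extra care in verifying that the substitution intertwines the two translations and in justifying the boundary case $\Re z=0$ by holomorphic continuation only makes explicit steps the paper leaves implicit.
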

\begin{proof}
For the case when $a=2$, we take $w=e^{-z}$ in the equality (see \cite[p. 83]{T1})
\begin{equation*}
\sum_{n=0}^{\infty}\frac{\Gamma(n+1)}{\Gamma(n+\alpha+1)}\varphi_n^{\alpha}(r)
\varphi_n^{\alpha}(s)w^{2n}=(1-w^2)^{-1}(rsw)^{-\alpha}\exp\Big\{-\frac{1}{2}\Big (\frac{1+w^2}{1-w^2}\Big )
 (r^2+s^2)\Big\}I_{\alpha}\Big(\frac{2wrs}{1-w^2}\Big),\left|w\right|<1.
\end{equation*}
Then we get the Lemma for $a=2$. And it reduces to Lemma 3.2 in \cite{CR} when $z=2t,\;t>0$ in this case.

For the general case of $a>0$,
change variables
$$r=\left(\frac a2\right)^{1/a}r_1^{2/a},\;\;s=\left(\frac a2\right)^{1/a}s_1^{2/a}.$$
Then we get  
\begin{align*}\mathcal T_r^{a,\alpha}q_{a,\alpha;z}(s)&=\left(\frac2a\right)^\alpha\mathcal T_{r_1}^\alpha q_{2,z;\alpha}(s_1)\\&=\left(\frac2a\right)^\alpha\frac{e^{-\frac{\coth\;z}2(r_1^2+s_1^2)}}{(r_1s_1)^\alpha\sinh\;z}I_\alpha\left(\frac{r_1s_1}{\sinh\;z}\right)=\frac{e^{-\frac{\coth\;z}2\frac2a(r^a+s^a)}}{(r^\frac a2s^\frac a2)^\alpha\sinh\;z}I_\alpha\left(\frac{\frac2ar^\frac a2s^\frac a2}{\sinh\;z}\right).\end{align*}
The proof of Lemma 3.4 is therefore completed. This Lemma can also be deduced from Hille--Hardy identity directly.
\end{proof}

Let $z=i\frac{\mathrm\pi}2$. Then from formula (5.7.4) in \cite{L},
$$I_\alpha\left(\frac{\frac2ar^\frac a2s^\frac a2}{\sinh i\frac{\mathrm\pi}2}\right)=e^{-\alpha\pi i/2}J_\alpha\left(\frac2ar^\frac a2s^\frac a2\right)=e^{-\alpha\pi i/2}\frac{\left(\frac2ar^\frac a2s^\frac a2\right)^\alpha}{2^\alpha\Gamma(\al+1)}j_\alpha\left(\frac2ar^\frac a2s^\frac a2\right).$$
So
\begin{align*}I_{a,\alpha;i\frac{\mathrm\pi}{2}}f(r)&=f\ast_{a,\alpha}q_{a,\alpha;i\frac{\mathrm\pi}{2}}(r)=\int_0^\infty f(s)\mathcal T_r^{a,\alpha}q_{a,\alpha;i\frac{\mathrm\pi}{2}}(s)s^{a\alpha+a-1}\,ds\\&=e^{-(\alpha+1)\pi i/2}\frac1{a^\alpha\Gamma(\al+1)}\int_0^\infty f(s)j_\alpha\left(\frac2ar^\frac a2s^\frac a2\right)s^{a\alpha+a-1}\,ds=e^{-(\alpha+1)\pi i/2}H_{a,\alpha}(f)(r).\end{align*}
The proof of Theorem 1.1 is therefore completed. $\hfill\Box$

\section{Proof of Theorem \ref{thm2}}

\

Consider the orthonormal basis \eqref{Y}
of $\mathcal H_k^m\left(\mathbb{R}^N\right)\vert_{\mathbb S^{N-1}}$. Accordingly, we have the $h$-harmonic expansion
for $f\in
L^2\left(\mathbb{R}^N,\vartheta_{k,a}\left(x\right)dx\right)$,
\begin{equation}\label{expansion}f\left(rx'\right)=\sum_{m=0}^\infty\sum_{i=1}^{d(m)}f_{m,i}(r)Y_i^m(x'),\end{equation}
where
$$f_{m,i}(r)=\int_{\mathbb S^{N-1}}f\left(rx'\right)Y_i^m(x'){\vartheta_{k,a}}(x')d\sigma(x').$$

In \cite{BSK2}, the authors proved that  $\Phi _{l,m,j}^{(a)}$ (see \eqref{Phi}) are
eigenfunctions for $-{{\Delta }_{k,a}}$
by interpreting ${{\Delta }_{k,a}}$ in the
framework of the (infinite dimensional) representation of the Lie
algebra $sl(2,\mathbb R)$ (see (3.9 a) and (3.32 a) in \cite{BSK2}) on its dense domain $W_{k,a}\left(\mathbb{R}^N\right)$, i.e.
\begin{equation}\label{Delta}-{{\Delta }_{k,a}}\Phi _{l,m,j}^{(a)}\left( x \right)=a\left( 2l+{{\lambda }_{k,a,m}}+1 \right)\Phi _{l,m,j}^{(a)}\left( x \right).\end{equation}

Then we have the spectral decomposition of $\mathcal I_{k,a}(z)f$ via the basis \eqref{basis4} in terms of Laguerre polynomials for $f\in L^2\left(\mathbb{R}^N,\vartheta_{k,a}\left(x\right)dx\right)$,
\begin{equation}\label{T}\mathcal I_{k,a}(z)f\left(x\right)=\sum_{l,m,j}e^{-z\left(2l+\lambda_{k,a,m}+1\right)}\left\langle f,\Phi_{l,m,j}^{(a)}\right\rangle_{k,a}\Phi_{l,m,j}^{(a)}\left(x\right),\end{equation}
where ${\left\langle f,g\right\rangle}_{k,a}=\int_{\mathbb{R}^N}f(x)g(x)\vartheta_{k,a}(x)dx$.

\noindent{\it Proof of Theorem 1.2}.
By Lemma 3.1, the $a$-deformed Laguerre holomorphic semigroup can also be written as
$$I_{a,\alpha;z}f=\sum_{l=0}^\infty e^{-z\left(2l+\alpha+1\right)}{\left\langle f,\widetilde\varphi_l^{a,\alpha}\right\rangle}_{d\mu_{a,\alpha}}\widetilde\varphi_l^{a,\alpha}.$$
We then apply the spherical harmonic expansion \eqref{expansion} to the
spectral definition \eqref{T} of $T_t^{k,a}f\left(x\right)$. By 
\eqref{Phi} and by noticing that
\begin{equation*}\label{relation}\widetilde{\varphi}_l^{a,\lambda_{k,a,m}}(r)=r^{-m}\psi_{l,m}^{(a)}(r)\end{equation*}
when $\lambda_{k,a,m}\geq -1/2$, we have
\begin{align*}\mathcal I_{k,a}(z)f\left(x\right)&=\sum_{l,m,j}e^{-z\left(2l+\lambda_{k,a,m}+1\right)}\left\langle f,\Phi_{l,m,j}^{(a)}\right\rangle_{k,a}\Phi_{l,m,j}^{(a)}\left(x\right)\\&=\sum_{m=0}^\infty\sum_{j=0}^{d(m)}\sum_{l=0}^\infty\int_0^\infty f_{m,j}\left(r\right)\psi_{l,m}^{(a)}\left(r\right)r^{2\left\langle k\right\rangle+N+a-3}dr\cdot\\&\;\;\;\;e^{-z\left(2l+\lambda_{k,a,m}+1\right)}\psi_{l,m}^{(a)}\left(r\right)Y_{m,j}(x')\\&=\sum_{m=0}^\infty\sum_{j=0}^{d(m)}\sum_{l=0}^\infty\int_0^\infty f_{m,j}\left(r\right)r^{-m}\widetilde{\varphi}_l^{a,\lambda_{k,a,m}}\left(r\right)r^{a\lambda_{k,a,m}+a-1}dr\cdot\\&\;\;\;\;e^{-z\left(2l+\lambda_{k,a,m}+1\right)}\widetilde{\varphi}_l^{a,\lambda_{k,a,m}}\left(r\right)r^mY_{m,j}(x')\\&=\sum_{m,j}Y_{m,j}(x')r^mI_{a,\lambda_{k,a,m};z}\left(\left(\cdot\right)^{-m}f_{m,j}\right)\left(r\right).
\end{align*}

For $f(x)=Y_{m,j}\left(x'\right)\psi(r),\;\psi(r)\in L^2\left({\mathbb{R}}_+,r^{\;2\left\langle k\right\rangle+N+a-3}dr\right),\;x=rx',$ we have the following Hecke-Bochner identity for the $(k,a)$-generalized Laguerre semigroups,
$$\mathcal I_{k,a}(z)f\left(x\right)=Y_{m,j}(x')r^mI_{a,\lambda_{k,a,m};z}\left(\left(\cdot\right)^{-m}\psi\right)\left(r\right).$$
Taking $m=0$, we get the special case for radial functions. The proof of Theorem 1.2 is therefore completed.
$\hfill\Box$

Define the $a$-deformed Dunkl--Hermite heat semigroup with infinitesimal generator ${{\Delta }_{k,a}}$ as
$T_t^{k,a}f:={\mathcal I}_{k,a}\left(ta\right)f,\;t>0$ and the $a$-deformed Laguerre heat semigroup as $T_{a,\alpha;t}f:=I_{a,\alpha;ta}f,\;t>0$.
Then from Theorem 4.1,
\begin{equation}\label{Ttka}T_t^{k,a}f\left(x\right)=\sum_{m,j}Y_{m,j}(x')r^mT_{a,\lambda_{k,a,m},t}\left(\left(\cdot\right)^{-m}f_{m,j}\right)\left(r\right).\end{equation}
It reduces to the equation in Theorem 4.5 in \cite{CR} when $a=2$.

\begin{rem}
The case of $a=2$ of the above argument gives a new proof of the Theorem 4.5 in \cite{CR}. In \cite{CR} the authors proved the Theorem 4.5 by using Dunkl--Hermite expansions and proving the identity for Dunkl--Hermite projections first. But if we use the basis given in terms of Laguerre polynomials, which are also the eigenfunctions of Dunkl Hermite operators, the theorem can be proven directly from the above. For radial functions it was shown in \cite{T2} in classical case that Hermite expansions reduce to Laguerre expansions.
The Heisenberg uncertainty principle for Dunkl transforms was
also proved using the two different expansions successively. It
was first proved by R\"osler using Dunkl--Hermite expansions (see
\cite{R}), and was then proved in \cite[Section 5.7]{BSK2} using the tools we refer to in this paper as well (see \cite{S} also for a proof using the basis given by Dunkl \cite{Du2} in terms of Laguerre polynomials).
\end{rem}

Now we use the following Lemma (see \cite{CR}) to give the expansion of the fractional $(k,a)$-generalized harmonic ocillator into fractional $a$-deformed Laguerre operator (there is a constant missed in \cite[Lemma 3.4]{CR}. Here we give the corrected Lemma).
\begin{lem}(\cite[Lemma 3.4]{CR})
Let $0<\sigma<1$, and $\lambda\in\mathbb{R}$ such that $\lambda+\sigma>-1$. Then,
$$2^\sigma\left|\Gamma\left(-\sigma\right)\right|\frac{\Gamma\left(\frac\lambda2+\frac{1+\sigma}2\right)}{\Gamma\left(\frac\lambda2+\frac{1-\sigma}2\right)}=\int_0^\infty\left(\cosh\;t-1\right)\left(\sinh\;t\right)^{-\sigma-1}dt+\int_0^\infty\left(1-e^{-t\lambda}\right)\left(\sinh\;t\right)^{-\sigma-1}dt.
$$
\end{lem}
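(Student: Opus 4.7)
\noindent\emph{Proof proposal.} The plan is to reduce the identity to the classical Mellin-type formula
\begin{equation}\label{Mellin}
\int_{0}^{\infty}(\sinh t)^{-\sigma-1}e^{-\nu t}\,dt=2^{\sigma}\,\Gamma(-\sigma)\,\frac{\Gamma\!\left(\tfrac{\nu+\sigma+1}{2}\right)}{\Gamma\!\left(\tfrac{\nu-\sigma+1}{2}\right)},
\end{equation}
valid in the strip $-1<\sigma<0$ and for $\nu>-\sigma-1$, and then to extend the relevant difference of such integrals to $0<\sigma<1$ by analytic continuation. First I would derive \eqref{Mellin} by the substitution $u=e^{-2t}$: one has $\sinh t=(1-u)/(2\sqrt{u})$, $e^{-\nu t}=u^{\nu/2}$, $dt=-du/(2u)$, and the integral collapses to a Beta integral $\tfrac{1}{2^{-\sigma}}B\!\left(\tfrac{\nu+\sigma+1}{2},-\sigma\right)$, giving \eqref{Mellin}. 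In this regime both $(\sinh t)^{-\sigma-1}$ and its product with $e^{-\nu t}$ are integrable at $t=0$ (since $\sigma<0$) and at $t=\infty$ (since $\nu>-\sigma-1$).

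Next I would subtract two copies of \eqref{Mellin} with different exponential factors. For $-1<\sigma<0$ and $\nu_1,\nu_2>-\sigma-1$,
\begin{equation}\label{diffform}
\int_{0}^{\infty}\!\bigl(e^{-\nu_1 t}-e^{-\nu_2 t}\bigr)(\sinh t)^{-\sigma-1}\,dt
=2^{\sigma}\Gamma(-\sigma)\!\left[\frac{\Gamma\!\left(\tfrac{\nu_1+\sigma+1}{2}\right)}{\Gamma\!\left(\tfrac{\nu_1-\sigma+1}{2}\right)}-\frac{\Gamma\!\left(\tfrac{\nu_2+\sigma+1}{2}\right)}{\Gamma\!\left(\tfrac{\nu_2-\sigma+1}{2}\right)}\right].
\end{equation}
Because $e^{-\nu_1 t}-e^{-\nu_2 t}=(\nu_2-\nu_1)t+O(t^{2})$ near $t=0$, the left-hand side is in fact absolutely convergent on the full strip $-1<\sigma<1$ whenever $\nu_1,\nu_2>-\sigma-1$, and is holomorphic there. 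The right-hand side is also holomorphic on this strip: the simple pole of $\Gamma(-\sigma)$ at $\sigma=0$ is cancelled by the zero at $\sigma=0$ of the bracket (which is zero because both Gamma-quotients equal $1$ there). Hence identity \eqref{diffform} persists on $-1<\sigma<1$, $\sigma\ne 0$, by the identity theorem.

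Now I would apply \eqref{diffform} to compute $I_1+I_2$, where $I_1$ and $I_2$ denote the two integrals on the right of the claim. Combining them and rewriting $\cosh t=\tfrac{1}{2}(e^{t}+e^{-t})$ gives
\[
I_1+I_2=\int_{0}^{\infty}(\cosh t-e^{-\lambda t})(\sinh t)^{-\sigma-1}\,dt
=\tfrac{1}{2}\!\int_{0}^{\infty}(e^{-t}-e^{-\lambda t})(\sinh t)^{-\sigma-1}\,dt+\tfrac{1}{2}\!\int_{0}^{\infty}(e^{t}-e^{-\lambda t})(\sinh t)^{-\sigma-1}\,dt.
\]
With $(\nu_1,\nu_2)=(1,\lambda)$ the condition $\nu_i>-\sigma-1$ is automatic since $\sigma>0$ and $\lambda+\sigma>-1$; with $(\nu_1,\nu_2)=(-1,\lambda)$ the same holds because $\sigma>0$. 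Applying \eqref{diffform} to each piece, the two "$\nu_1=\pm 1$" Gamma-quotients are
\[
\frac{\Gamma(\sigma/2)}{\Gamma(-\sigma/2)}+\frac{\Gamma(1+\sigma/2)}{\Gamma(1-\sigma/2)},
\]
and the functional equation $z\Gamma(z)=\Gamma(z+1)$ gives $\Gamma(\sigma/2)/\Gamma(-\sigma/2)=-\Gamma(1+\sigma/2)/\Gamma(1-\sigma/2)$, so these two terms cancel. Only the $\nu_2=\lambda$ contributions survive, producing
\[
I_1+I_2=-\,2^{\sigma}\,\Gamma(-\sigma)\,\frac{\Gamma\!\left(\tfrac{\lambda+\sigma+1}{2}\right)}{\Gamma\!\left(\tfrac{\lambda-\sigma+1}{2}\right)}
=2^{\sigma}\,|\Gamma(-\sigma)|\,\frac{\Gamma\!\left(\tfrac{\lambda+\sigma+1}{2}\right)}{\Gamma\!\left(\tfrac{\lambda-\sigma+1}{2}\right)},
\]
since $\Gamma(-\sigma)<0$ for $0<\sigma<1$, which is the desired identity.

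The main obstacle is the analytic continuation of \eqref{diffform} from $-1<\sigma<0$ to $0<\sigma<1$: in the target range the two pieces of the difference are individually divergent at $t=0$, so one cannot split the integral, and one must justify that the difference form of the integral is holomorphic in $\sigma$ (uniformly on compact subsets of the strip) and match it against the meromorphic RHS whose pole at $\sigma=0$ is removable. Once this is done, everything else is the Gamma-function algebra described above.
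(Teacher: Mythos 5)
The paper offers no proof of this lemma to compare against: it is quoted verbatim from [CR, Lemma 3.4] (with only a constant corrected), so your argument has to stand on its own. Its architecture is reasonable and the Gamma-function algebra at the end is correct: the Beta-integral evaluation $\int_0^\infty e^{-\nu t}(\sinh t)^{-\sigma-1}\,dt=2^\sigma\Gamma(-\sigma)\,\Gamma\bigl(\tfrac{\nu+\sigma+1}{2}\bigr)/\Gamma\bigl(\tfrac{\nu-\sigma+1}{2}\bigr)$ is right in the range $-1<\sigma<0$, $\nu>-\sigma-1$; the recombination $I_1+I_2=\int_0^\infty(\cosh t-e^{-\lambda t})(\sinh t)^{-\sigma-1}\,dt$ with $\cosh t=\tfrac12(e^t+e^{-t})$ is legitimate; and the cancellation $\Gamma(\sigma/2)/\Gamma(-\sigma/2)=-\Gamma(1+\sigma/2)/\Gamma(1-\sigma/2)$ together with $|\Gamma(-\sigma)|=-\Gamma(-\sigma)$ for $0<\sigma<1$ does yield the stated identity.

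The genuine gap is in the analytic continuation of your subtracted identity, and it is worse than the technical point you flag at the end. For the piece with $(\nu_1,\nu_2)=(-1,\lambda)$ there is no base region at all: $\int_0^\infty e^{t}(\sinh t)^{-\sigma-1}\,dt$ behaves like $\int^\infty e^{-\sigma t}\,dt$ at infinity and so diverges for every $\sigma\le 0$; equivalently, the constraint $\nu_1=-1>-\sigma-1$ forces $\sigma>0$, which is disjoint from the interval $-1<\sigma<0$ on which your Mellin formula holds. Thus the subtracted identity with $\nu_1=-1$ is never established on a nonempty open set, and the identity theorem has nothing to continue; the obstruction is the absence of an overlap region, not the removable pole of $\Gamma(-\sigma)$ at $\sigma=0$. (The same failure occurs for the $(1,\lambda)$ piece when $\lambda\le-1$, which the hypothesis $\lambda+\sigma>-1$ permits.) To close the argument you must either perform the continuation jointly in $\sigma$ and the exponents --- both sides are holomorphic on the connected domain $\{-1<\Re\sigma<1,\ \Re\nu_i>-\Re\sigma-1\}$, which contains both the region where the termwise computation is valid and the target parameters --- or verify the difference formula for $0<\sigma<1$ directly by a regularization (for instance, integrating $\int_\epsilon^\infty$ by parts, or subtracting the first Taylor term of the exponentials before applying the Mellin formula and letting $\epsilon\to0$). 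As written, the proof is incomplete at exactly this step.
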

Denote by $E_\sigma:=\frac{a^\sigma}{\left|\Gamma\left(-\sigma\right)\right|}\int_0^\infty\left(\cosh\;t-1\right)\left(\sinh\;t\right)^{-\sigma-1}dt.$ Then 
\begin{align*}L_{a,\alpha;\sigma}f\left(r\right)&=\frac{2^{\alpha+1}}{a^\alpha\Gamma(\alpha+1)}\sum_{l=0}^\infty(2a)^\sigma S_l^{a,\alpha;\sigma}f\ast_{a,\alpha}\varphi_l^{a,\alpha}(r)\\&=E_\sigma f\left(r\right)+\frac{a^\sigma}{\left|\Gamma\left(-\sigma\right)\right|}\int_0^\infty\left(f\left(r\right)-T_{a,\alpha;t/a}f\left(r\right)\right)\left(\sinh\;t\right)^{-\sigma-1}dt.\end{align*}
Given $0<\sigma<1$, we define conformally invariant fractional $(k,a)$-generalized harmonic ocillator $\left(-\triangle_{k,a}\right)_{\sigma}$ to be the
operator
$$\left(-\triangle_{k,a}\right)_{\sigma}=(2a)^\sigma\frac{\Gamma\left(\frac{-\Delta_{k,a}}{2a}+\frac{1+\sigma}2\right)}{\Gamma\left(\frac{-\Delta_{k,a}}{2a}+\frac{1-\sigma}2\right)}.$$
So, in view of \eqref{Delta}, $\left(-\triangle_{k,a}\right)_{\sigma}$ corresponds to the spectral multiplier $(2a)^\sigma\Gamma\left(\frac{2l+\lambda_{k,a,m}+1}2+\frac{1+\sigma}2\right)/\\
\Gamma\left(\frac{2l+\lambda_{k,a,m}+1}2+\frac{1-\sigma}2\right)$ and it equals to 
$$E_\sigma f\left(x\right)+\frac{a^\sigma}{\left|\Gamma\left(-\sigma\right)\right|}\int_0^\infty\left(f\left(x\right)-T_{t/a}^{k,a}f\left(x\right)\right)\left(\sinh\;t\right)^{-\sigma-1}dt$$
from Lemma 4.2.
For $a=2$, it should coincide with the fractional Dunkl--Hermite operator in \cite{CR} (there is a constant factor missed in the definition given in \cite {CR}).

By formula \eqref{Ttka},
\begin{align*}\left(-\triangle_{k,a}\right)_{\sigma}f\left(x\right)&=E_\sigma f\left(x\right)+\frac{a^\sigma}{\left|\Gamma\left(-\sigma\right)\right|}\int_0^\infty\left(f\left(x\right)-T_{t/a}^{k,a}f\left(x\right)\right)\left(\sinh\;t\right)^{-\sigma-1}dt\\&=\sum_{m,j}Y_{m,j}(x')r^m\bigg[E_\sigma r^{-m}f_{m,j}\left(r\right)\\&\;\;\;+\frac{a^\sigma}{\left|\Gamma\left(-\sigma\right)\right|}\int_0^\infty\left(r^{-m}f_{m,j}\left(r\right)-T_{a,\lambda_{k,a,m},{t/a}}\left(\left(\cdot\right)^{-m}f_{m,j}\right)\left(r\right)\right)\left(\sinh\;t\right)^{-\sigma-1}dt\bigg]\\&=\sum_{m,j}Y_{m,j}(x')r^m\bigg[E_\sigma g_{m,j}\left(r\right)\\&\;\;\;+\frac{a^\sigma}{\left|\Gamma\left(-\sigma\right)\right|}\int_0^\infty\left(g_{m,j}\left(r\right)-T_{a,\lambda_{k,a,m},{t/a}}g_{m,j}\left(r\right)\right)\left(\sinh\;t\right)^{-\sigma-1}dt\bigg]\\&=\sum_{m,j}Y_{m,j}(x')r^mL_{a,\lambda_{k,a,m};\sigma}g_{m,j}\left(r\right),\end{align*}
where $g_{m,j}\left(r\right)=r^{-m}f_{m,j}\left(r\right)$.

The following Lemma was found by Yafaev \cite{Y} for $v=m/2,\;m\in \mathbb N$, and was then proved in \cite{GI2} for any $v>0$.
\begin{lem}(\cite[Lemma 2.3]{GI2})
If $v>0$, then $$\frac{\Gamma\left(t+v\right)}{\Gamma\left(\tau+v\right)}<\frac{\Gamma\left(t\right)}{\Gamma\left(\tau\right)},\;\;\;\;\;\;0<t<\tau.$$
\end{lem}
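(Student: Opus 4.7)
The plan is to reduce the claimed inequality to the strict monotonicity of a single one-variable function derived from $\log\Gamma$, and then invoke the well-known strict log-convexity of the Gamma function. Taking logarithms and rearranging, the statement $\Gamma(t+v)/\Gamma(\tau+v) < \Gamma(t)/\Gamma(\tau)$ is equivalent to $g(t) < g(\tau)$, where $g(x) := \log\Gamma(x+v) - \log\Gamma(x)$. Hence it suffices to prove that $g$ is strictly increasing on $(0,\infty)$.

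For this I would differentiate to obtain $g'(x) = \psi(x+v) - \psi(x)$, where $\psi = \Gamma'/\Gamma$ is the digamma function. The standard termwise-positive series
$$\psi'(x) = \sum_{n=0}^{\infty} \frac{1}{(n+x)^2}, \qquad x>0,$$
shows that $\psi$ is strictly increasing on $(0,\infty)$. Combined with $v>0$, this yields $g'(x) > 0$ on $(0,\infty)$, so $g$ is strictly increasing, and the desired inequality follows.

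An alternative route avoiding $\psi$ is to apply strict log-convexity of $\Gamma$ directly. Setting $\lambda := v/(\tau - t + v) \in (0,1)$, one has the two convex-combination identities $t+v = (1-\lambda)t + \lambda(\tau+v)$ and $\tau = \lambda t + (1-\lambda)(\tau+v)$, and strict log-convexity gives
$$\log\Gamma(t+v) < (1-\lambda)\log\Gamma(t) + \lambda\log\Gamma(\tau+v),$$
$$\log\Gamma(\tau) < \lambda\log\Gamma(t) + (1-\lambda)\log\Gamma(\tau+v).$$
Adding these inequalities and exponentiating yields $\Gamma(t+v)\Gamma(\tau) < \Gamma(t)\Gamma(\tau+v)$, equivalent to the claim. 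A third, equally short route is to write $\Gamma(x+v)/\Gamma(x) = \Gamma(v)/B(x,v)$ and note that $\partial_x B(x,v) = \int_0^1 s^{x-1}(\log s)(1-s)^{v-1}\,ds < 0$ since $\log s < 0$ on $(0,1)$, so $B(\cdot,v)$ is strictly decreasing. No serious obstacle is anticipated: the entire content of the lemma reduces to the positivity of $\psi'$, equivalently the strict log-convexity of $\Gamma$, which is a textbook property.
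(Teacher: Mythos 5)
Your proposal is correct. Note, however, that the paper does not prove this lemma at all: it is quoted verbatim from \cite[Lemma 2.3]{GI2} (with the remark that Yafaev \cite{Y} had it for $v=m/2$), so there is no in-paper argument to compare against; you have supplied a complete, self-contained proof where the paper only cites. All three of your routes are valid and are really the same fact in three guises --- the positivity of $\psi'$, the strict log-convexity of $\Gamma$, and the monotonicity of $x\mapsto B(x,v)$ are equivalent statements --- and each correctly reduces the claim to the rearranged form $\Gamma(t+v)\Gamma(\tau)<\Gamma(t)\Gamma(\tau+v)$. Two small points worth making explicit if you write this up: in the first route, the equivalence of the displayed inequality with $g(t)<g(\tau)$ uses that all four Gamma values are positive (which holds since $t,\tau,t+v,\tau+v>0$); and in the second route, the strictness of the log-convexity inequality requires the convex combination to be nondegenerate, which is guaranteed because $t<\tau+v$ and $\lambda\in(0,1)$. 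With those remarks the argument is airtight.
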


By Theorem 3.3 we have 
\begin{align*}{\left\langle\left(-\triangle_{k,a}\right)_{\sigma}f,f\right\rangle}&_{L^2\left(\mathbb{R}^N,\vartheta_{k,a}(x)dx\right)}=\sum_{m=0}^\infty\sum_{j=1}^{d(m)}{\left\langle L_{a,\lambda_{k,a,m};\sigma}g_{m,j},g_{m,j}\right\rangle}_{L^2\left(\left(0,\infty\right),d\mu_{a,\lambda_{k,a,m}}\left(r\right)\right)}\\&\geq\sum_{m=0}^\infty\sum_{j=1}^{d(m)}\left(\frac{2a}\delta\right)^\sigma\left(B_{\lambda_{k,a,m},\sigma}^\delta\right)^2\int_0^\infty\left|g_{m,j}(r)\right|^2\frac{\omega_{\lambda_{k,a,m},\sigma}^{\delta,a}(r)}{\omega_{\lambda_{k,a,m},-\sigma}^{\delta,a}(r)}d\mu_{a,\lambda_{k,a,m}}(r)\\&=\sum_{m=0}^\infty\sum_{j=1}^{d(m)}\left(\frac{2a}\delta\right)^\sigma\left(B_{\lambda_{k,a,m},\sigma}^\delta\right)^2\int_0^\infty\left|f_{m,j}(r)\right|^2\frac{\omega_{\lambda_{k,a,m},\sigma}^{\delta,a}(r)}{\omega_{\lambda_{k,a,m},-\sigma}^{\delta,a}(r)}d\mu_{a,\lambda_a}(r).\end{align*}
Then by Lemma 4.3 and a similar argument as in the end of the proof in \cite{CR}, 
\begin{align*}\left(\frac{2a}\delta\right)^\sigma\left(B_{\lambda_{k,a,m},\sigma}^\delta\right)^2\frac{\omega_{\lambda_{k,a,m},\sigma}^{\delta,a}(r)}{\omega_{\lambda_{k,a,m},-\sigma}^{\delta,a}(r)}&=\left(\frac a2\right)^\sigma\delta^\sigma\frac{\Gamma\left(\frac{\lambda_{k,a,m}+2+\sigma}2\right)}{\Gamma\left(\frac{\lambda_{k,a,m}+2-\sigma}2\right)}\frac{K_{(\lambda_{k,a,m}+1+\sigma)/2}\left((\delta+\frac2ar^a)/2\right)}{K_{(\lambda_{k,a,m}+1-\sigma)/2}\left((\delta+\frac2ar^a)/2\right)}\left(\delta+{\textstyle\frac2a}r^a\right)^{-\sigma}\\&\geq\left(\frac a2\right)^\sigma\delta^\sigma\frac{\Gamma\left(\frac{\lambda_a+2+\sigma}2\right)}{\Gamma\left(\frac{\lambda_a+2-\sigma}2\right)}\left(\delta+{\textstyle\frac2a}r^a\right)^{-\sigma}=\left(\frac a2\right)^\sigma B_{\lambda_a,\sigma}^\delta\left(\delta+{\textstyle\frac2a}r^a\right)^{-\sigma}.\end{align*}
Therefore,
\begin{align*}
{\left\langle\left(-\triangle_{k,a}\right)_{\sigma}f,f\right\rangle}_{L^2\left(\mathbb{R}^N,\vartheta_{k,a}(x)dx\right)}&\geq\sum_{m,j}\left(\frac{2a}\delta\right)^\sigma\left(B_{\lambda_{k,a,m},\sigma}^\delta\right)^2\int_0^\infty\left|f_{m,j}(r)\right|^2\frac{\omega_{\lambda_{k,a,m},\sigma}^{\delta,a}(r)}{\omega_{\lambda_{k,a,m},-\sigma}^{\delta,a}(r)}d\mu_{a,\lambda_a}(r)\\&\geq \left(\frac a2\right)^\sigma B_{\lambda_a,\sigma}^\delta\sum_{m,j}\int_0^\infty\left|f_{m,j}(r)\right|^2\left(\delta+{\textstyle\frac2a}r^a\right)^{-\sigma}d\mu_{a,\lambda_a}(r)\\&=\left(\frac a2\right)^\sigma B_{\lambda_a,\sigma}^\delta\int_{\mathbb{R}^N}\frac{\left|f(x)\right|^2}{\left(\delta+\frac2a\left|x\right|^a\right)^\sigma}\vartheta_{k,a}\left(x\right)dx.
\end{align*}
The proof of Theorem 1.4 is completed.

\section{Characterization of $\Omega_{k,a}^{}\left(\gamma_z\right)$ on each $k$-spherical component $\mathcal H_k^m\left(\mathbb{R}^N\right)$}

\

In Section 4.1 of \cite{BSK2} the authors gave the definition of the radial part $\Omega_{k,a}^{(m)}\left(\gamma_z\right)$, $\Re z\geq 0$ of the holomorphic semigroup  $\Omega_{k,a}\left(\gamma_z\right)={\mathcal I}_{k,a}\left(z\right)$ on each $k$-spherical component $\mathcal H_k^m\left(\mathbb{R}^N\right)$ via a decomposition of unitary representation  $\Omega_{k,a}\left(\gamma_z\right)$ of the universal covering group $S\widetilde{L(2,}\mathbb R)$ on $L^2\left(\mathbb{R}^N\vartheta_{k,a}(x)dx\right)$ (see \cite[Section 4.1]{BSK2} for the detailed definition of $\Omega_{k,a}^{(m)}\left(\gamma_z\right)$). And they showed that the unitary operator $\Omega_{k,a}^{(m)}\left(\gamma_z\right)$ on $L^2\left({\mathbb{R}}_+,r^{\;2\left\langle k\right\rangle+N+a-3}dr\right)$ can be expressed as 
\begin{align}\label{Omega}\Omega_{k,a}^{(m)}\left(\gamma_z\right)f(r)=\int_0^\infty\Lambda_{k,a}^{(m)}\left(r,s;z\right)f(s)s^{2\left\langle k\right\rangle+N+a-3}ds,
\end{align}
where $\Lambda_{k,a}^{(m)}\left(r,s;z\right)$ has its closed formula (see \cite[(4.11)]{BSK2})
$$\Lambda_{k,a}^{(m)}\left(r,s;z\right)=\frac{\left(rs\right)^{-\left\langle k\right\rangle-\frac N2+1}}{\sinh\;z}e^{-\frac{\coth z}a(r^a+s^a)}I_{\lambda_{k,a,m}}\left(\frac{\frac2ar^\frac a2s^\frac a2}{\sinh z}\right).$$
The integral on the right hand side of \eqref{Omega} converges for $f\in L^2\left({\mathbb{R}}_+,r^{\;2\left\langle k\right\rangle+N+a-3}dr\right)$ if $\Re z>0$ and for all $f$ in the dense subspace of $L^2\left({\mathbb{R}}_+,r^{\;2\left\langle k\right\rangle+N+a-3}dr\right)$ spanned by the functions $\left\{\psi_{l,m}^{(a)}(r):\;l\in\mathbb{N}\right\}$ if $\Re z=0$ (see \eqref{psi} for the definition of $\psi_{l,m}^{(a)}(r)$).
We give an explicit expression of $\Omega_{k,a}^{(m)}\left(\gamma_z\right)$ in this section via the $a$-deformed Laguerre operator $L_{a,\alpha}$ (see \cite{B} for the case of $a=2$ on such expression).

\begin{thm}
For $\lambda_{k,a,m}\geq -1/2$, $\Re z\geq 0$ and $s>0$, $\Omega_{k,a}^{(m)}\left(\gamma_z\right)$ acting on $L^2\left({\mathbb{R}}_+,r^{\;2\left\langle k\right\rangle+N+a-3}dr\right)$ has the form
$$\Omega_{k,a}^{(m)}\left(\gamma_z\right)f(s)=s^{m}I_{a,\lambda_{k,a,m};z}\left(\left(\cdot\right)^{-m}f\right)\left(s\right),\;f\in L^2\left({\mathbb{R}}_+,r^{\;2\left\langle k\right\rangle+N+a-3}dr\right).$$
Thus
$${\left.\frac d{dz}\right|}_{z=0}\Omega_{k,a}^{(m)}\left(\gamma_z\right)f(s)=-s^m\frac1aL_{a,\lambda_{k,a,m}}\left(\left(\cdot\right)^{-m}f\right)\left(s\right).$$
\end{thm}

\begin{proof}
We can take $\al$ as $\lambda_{k,a,m}$ in Lemma 3.4, then we get 
\begin{align}\label{Tr}\mathcal T_r^{a,\lambda_{k,a,m}}q_{a,\lambda_{k,a,m};z}(s)=(rs)^{-m}\Lambda_{k,a}^{(m)}\left(r,s;z\right).\end{align}

For every $f$ in the dense subspace of $L^2\left({\mathbb{R}}_+,r^{\;2\left\langle k\right\rangle+N+a-3}dr\right)$ spanned by the functions $\left\{\psi_{l,m}^{(a)}(r):\;l\in\mathbb{N}\right\}$, we have
\begin{align*}\Omega_{k,a}^{(m)}\left(\gamma_z\right)f(s)&=\int_0^\infty f(r)\Lambda_{k,a}^{(m)}\left(r,s;z\right)r^{\;2\left\langle k\right\rangle+N+a-3}dr\\&=s^{m}\int_0^\infty r^{-m}f(r)\mathcal T_r^{a,\lambda_{k,a,m}}q_{a,\lambda_{k,a,m};z}(s)r^{\;2m+2\left\langle k\right\rangle+N+a-3}dr\\&=s^{m}I_{a,\lambda_{k,a,m};z}\left(\left(\cdot\right)^{-m}f\right)\left(s\right).
\end{align*}
Then from the boundedness of the operator $\Omega_{k,a}^{(m)}\left(\gamma_z\right)$ on $L^2\left({\mathbb{R}}_+,r^{\;2\left\langle k\right\rangle+N+a-3}dr\right)$, we get 
$$\Omega_{k,a}^{(m)}\left(\gamma_z\right)f(s)=s^{m}I_{a,\lambda_{k,a,m};z}\left(\left(\cdot\right)^{-m}f\right)\left(s\right)$$ for all $f\in L^2\left({\mathbb{R}}_+,r^{\;2\left\langle k\right\rangle+N+a-3}dr\right)$.
\end{proof}
\begin{rem}
i). From this theorem, the spherical harmonic expansion of the $(k,a)$-generalized Laguerre semigroup \eqref{expansion2} can be derived directly by (4.3) in \cite{BSK2}.\\
ii). Taking $m=0$, we get the formula of $\triangle_{k,a}$ on radial Schwartz functions $f=f_0\left(\left|\cdot\right|\right)$, $f_0\in \mathcal S\left({\mathbb{R}}_+\right)$,
$$\triangle_{k,a}f(x)=-L_{a,\lambda_a}\left(f_0\right)\left(r\right),\;r=\left|x\right|.$$
This is equivalent to the formula of Dunkl Laplacian $\triangle_{k}$ on radial functions in \cite[Proposition 4.15]{M}.
\end{rem}

\

\section*{Acknowledgments}The author would like to thank the referee and thank also Salem Ben Sa\"id, Toshiyuki Kobayashi and his adviser Nobukazu Shimeno very much for insightful comments. Furthermore, the author would like to thank Luz Roncal for helpful discussions.

\

\end{document}